\newtheorem{prop}{Proposition}
\let\s\mathsf
\let\Cal\mathcal
\def\Frac#1#2{{\textstyle #1\over\textstyle #2}}%
\title{Characterization of hyperbolic groups via random walks}
\author{Victor Gerasimov}
\address{Victor Guerassimov (Gerasimov), Departamento de Matem\'atica,
Universidade Federal de Minas Gerais,
Av. Ant\^onio Carlos, 6627/
CEP: 31270-901. Caixa Postal: 702
Belo Horizonte, MG,
Brazil}
\email{victor.gerasimov@gmail.com}
\author{Leonid Potyagailo}
\address{ Leonid Potyagailo, UFR de Math\'ematiques, Universit\'e
de Lille, 59655 Villeneuve d'Ascq cedex, France}
\email{leonid.potyagailo@univ-lille.fr}
\thanks{This work was
supported by CEMPI LABEX project (UMR 8524) of the
University of Lille; and the Max Planck Institute for Mathematics, Bonn}
\subjclass[2020]{Primary 20F65, 20F67; Secondary 05C81, 60B15}
\keywords{Random walks, Ancona's property, Hyperbolic graphs and groups, geodesic bigons}
\begin{document}
\begin{abstract}
Our first result gives a partial converse to a well-known theorem of A.
Ancona for hyperbolic groups.
We prove that a group $G$, equipped with a symmetric probability measure
whose finite support generates $G$, is hyperbolic if it is nonamenable
and satisfies the following condition:\hfil\penalty-10000
for a sufficiently small $\varepsilon >0$
 and $r\geqslant0$, and
for every triple $(x, y, z)$, belonging to a word
geodesic of the Cayley graph, the probability that a random path from
$x$ to $z$ intersects the closed ball of radius $r$ centered at $y$ is
at least $1-\varepsilon.$

We note that if a group is hyperbolic then the above
 condition for $r=0$ is satisfied
by Ancona's theorem and for any $r>0$ follows from this paper.

Another our theorem claims that a finitely generated group is hyperbolic
if and only if the probability that a random path, connecting two
antipodal points of an open ball of radius $r$ does not intersect it
is exponentially small with respect to $r$ for $r\gg0$..
The proof is
based on a  purely geometric criterion for the hyperbolicity of a connected graph.
\end{abstract}

%
%
%
\date{\today}
\maketitle
\markboth{V.~Gerasimov, L.~Potyagailo, {\sl Ancona's property and hyperbolicity}\quad(\today)}{V.~Gerasimov, L.~Potyagailo,
{\sl Ancona property and hyperbolicity}\quad(\today)}

\def\mpPath{}
\def\-{\hbox{-}}
\section{Introduction}
\subsection{History and description of the results}
The techniques of  random walks
in describing geometrical properties
of manifolds and their fundamental groups
appeared in the classical works of H.~Furstenberg. His approach to study group boundaries using random walks made a breakthrough in the
rigidity theory of discrete groups (see [Fu71] and references therein)

After Gromov's introduction of word hyperbolic groups \cite{Gr87},
A.~Ancona, continuing Furstenberg's approach,
discovered an important probabilistic property of random walks on hyperbolic groups
\cite{Anc87}, \cite{Anc88}.
One of his famous results claims that if a group $G$ is hyperbolic then,
for every triple $(x,y,z)$ of elements of $G$, situated in this order on a word geodesic,
their Gromov product with the respect to the Green metric is uniformly bounded.

The notion of the Green metric appeared in the theory of harmonic functions on Riemannian manifolds.
In the context of random walks the Green distance between $x$ and $y$ is
a function of the probability that the random walk started at $x$ will reach $y$ (see Subsection 3.1).
Let $G$ be a finitely generated group equipped with a symmetric probability
 measure $\mu$ with a finite support which generates $G$. We call such a measure  \textit{admissible},
so every group possessing such a measure is automatically finitely generated.

The result of Ancona is equivalent to that for a group $G$ equipped with an admissible measure
 $\mu$ there exists $\varepsilon$ in the open interval $(0,1)$ such that  the following property
 (which we call \textit{weak Ancona property}, see Section 3) holds:
\vskip3pt
$\s{WA}_\varepsilon$: \textsl{for every elements $x,z$ of a group $G$
and every $y$ that belongs to the word geodesic segment $[x,z]$,
the probability that a random path between $x$ and $z$ passes through $y$ is at least $1-\varepsilon$.}
\vskip3pt
Using this property, Ancona proved
that if $G$ is hyperbolic  then
the Martin boundary of $G$ corresponding to a random walk with respect to an admissible measure $\mu$
is naturally homeomorphic to the Gromov boundary of $G$.

This result possesses generalizations to a larger classes of groups  \cite{GGPY21}.
\vskip3pt
The following conjecture is the main motivation for this paper:\vskip3pt
\textbf{Conjecture.} \textsl{Let $G$ be a group possessing an admissible measure satisfying $\s{WA}_\varepsilon$
for some $\varepsilon\in(0,1)$. Then $G$ is hyperbolic.}
\vskip3pt
We give a partial positive answer to this conjecture.
We consider the following stronger property:
\vskip2pt
$\s{TA}_{\varepsilon,r}$ (thick Ancona property):
 \textsl{for every $x,z\in G$
and every $y\in[x,z]$
the probability that a random path between $x$ and $z$ intersects the closed
ball $B_{\leqslant r}y$ of radius $r$ centered at $y$, is at least $1-\varepsilon$.}
\vskip3pt
The property $\s{WA}_\varepsilon$ coincides with $\s{TA}_{\varepsilon,0}$.
If $\s{TA}_{\varepsilon,r}$ holds then for every $r'\leqslant r$
there exists $\varepsilon'\in(0,1)$ (depending on $\varepsilon$) such that $\s{TA}_{\varepsilon',r'}$ holds
and in particular $\s{WA}_{\varepsilon'}$ holds
(see Subsection 3.2).

The main result of the paper is the following:

\vskip3pt
\textbf{Theorem 3}. \textsl{Let $G$ be a non-amenable group
equipped with an admissible measure such that
for a sufficiently small positive $\varepsilon$ and some $r\geqslant0$
the property $\s{TA}_{\varepsilon,r}$ holds. Then $G$ is hyperbolic}.
\vskip3pt

The proof of Theorem 3 is constructive.
We introduce in it two  conditions $\s A$ and $\s B$, containing numerical parameters which depend  only on $G$ and $\mu$.
Using these parameters, we obtain a critical value $\varepsilon_0$ depending on $G$ and $\mu$, such that every
$\varepsilon\in(0,\varepsilon_0)$ is appropriate.
Furthermore, we prove in Proposition 6 that both 
conditions   $\s A$ and $\s B$
  follow from the non-amenability assumption.
So Proposition 6 gives a stronger statement than Theorem 3, where the non-amenability assumption  is replaced by weaker conditions $\s A$ and $\s B$.

The  property $\s{TA}$ or $\s{WA}$ valid for some $\varepsilon$,
may not hold for a smaller value of $\varepsilon$.
 However if $G$ is hyperbolic and the measure $\mu$ is admissible then for every $\varepsilon\in(0,1)$ there exists
$r\geqslant0$ such that $\s{TA}_{\varepsilon,r}$ holds (Proposition 1 in Section 3). So Theorem 3 implies the following:
\vskip3pt
\textbf{Corollary}\sl. Let $G$ be a non-amenable  group equipped with an admissible measure $\mu$. Assume that  the property
 $\s{TA}_{\varepsilon,r}$ holds  for sufficiently small $\varepsilon$ (less than the above $\varepsilon_0$) and some $r\geqslant0$.
 Then for every $\varepsilon\in(0,1)$ there exists $r=r(\varepsilon) \geqslant0$ such that $\s{TA}_{\varepsilon,r}$ holds. \rm
\vskip3pt
It is worth mentioning that the Ancona property $\s{WA}$ was used by P.~Ha\"issinsky and P.~Matthieu
to obtain  a much shorter proof of the well-known Baum-Connes conjecture   for hyperbolic groups \cite{HM14}.
The original proofs were given independently by V.~Lafforgue \cite{La02} and I.~Mineyev and G.~Yu \cite{MY02}.
Using the hyperbolicity of $G$ and Ancona's property the authors of \cite{HM14}
proved the conjecture by considering the action of $G$ on the Martin compactification.
Note that the Baum-Connes conjecture is true for amenable groups.
Thus Theorem 3 implies that if the group is either amenable
or non-amenable having the property $\s{TA}_{\varepsilon,r}$ for
$\varepsilon\in(0,\varepsilon_0)$, where $\varepsilon_0$ is as above, then the Baum-Connes conjecture is true for $G$.
\vskip3pt
We provide another characterization
of hyperbolic groups (and graphs) in terms of random walks.
It does not require the nonamenability assumption.

Let $(G,\mu)$ be an admissible pair.
Denote by $\varepsilon_0$ the minimal value of $\mu$ on its finite support $\mathcal S$.

For $x,z\in G$ let $y$ be the midpoint
of the geodesic interval $[x,z]$ of the Cayley graph $\s{Ca}(G,\Cal S)$.
Thus $\{x,z\}$ is a diametral pair of the ball $B_{\leqslant r}(y)$ where $r$
denotes the distance from $y$ to $x$ and $z$.
We have
\vskip3pt
\textbf{Theorem 2}
 (Proposition 4, Section 5). \textsl{The group $G$ is hyperbolic
 if and only if, for some $\varepsilon\in(0,\varepsilon_0)$
and sufficiently big $r$,
the probability that a random path from $x$ to $z$
not intersecting the open ball $B_{<r}(y)$
is at most $\varepsilon^{l0r}$.}
\vskip3pt
The proof of Theorem 2 uses the following geometric hyperbolicity
 criterion for a graph $\Gamma$.

For a geodesic interval $I=[x,z]\subset\Gamma$
of length $2r$
with the midpoint $y$, let\newline
 $\pi_I\leftrightharpoons\s{inf}\{\ell_\gamma:\gamma $ is a rectifyable curve
joining $x$ and $z$, that does not intersect the ball $B_{<r}y\}$.

Denote $\pi(\Gamma)=\underset{r\rightarrow\infty}{\s{lim\,inf}}\pi_I$.
\vskip3pt
\textbf{Theorem 1.} (Proposition 3, Section 4) \textsl{If
$\Gamma$ is connected and $\pi(\Gamma)>10$ then $\Gamma$ is hyperbolic.}
\vskip3pt
The converse to Theorem 1 is also true.
Moreover, if $\Gamma$ is hyperbolic then $\pi(\Gamma)=\infty$ \cite[7.1A]{Gr87}.
\subsection{The structure of the article}
In Section 2 we give the definitions and introduce the notation needed for the exact formulations
of our results. In Section 3 we discuss versions of Ancona's property.
In Section 4 we discuss the spectral radius of the Green series and the non-amenability condition.
In Section 5 we prove Theorems 1 and 2.
In Section 6 we prove a technical lemma about 2-Lipschitz functions.
The proof of Proposition 6, implying Theorem 3 is given in Section 7.
\subsection{Acknowledgements}
The authors thank the scientific program CEMPI LABEX (UMR 8524) of the University of Lille for several research invitations and financial support of both of us during our work on this paper.

This article was completed during our three months stay at the Max Planck Institute for Mathematics in Bonn.
We are grateful to the Max Planck Society for awarding us the Research Grants and for
the excellent conditions for our work.
\section{Conventions and notation}
\subsection{General conventions}
The symbol `$\leftrightharpoons$' means `is equal by definition'.

For a function $f$ defined on a set $S$ we say that $S$ is the \it domain \rm$\s{dom}(f)=S$.\hfil\penalty-10000
For a set $T\subset\s{dom}(f)$ we denote by $f|_T$ the restricion of $f$ over $T$.
So, $\s{dom}(f|_T)=T$.

For the value $f(x)$ ($x\in\s{dom}(f)$), we somtimes use the abbreviations $fx$ and $f_x$.

For $a,b\in\Bbb R$ we denote
the \it closed interval \rm$[a,b]\leftrightharpoons\{x\in\Bbb R:a\leqslant x\leqslant b\}$
and the \it open interval \rm$]a,b[\leftrightharpoons\{x\in\Bbb R:a<x<b\}$ (we prefere to reserve
the notation `$(a,b)$' for ordered pair).

Considering a fixed metric space we denote by $B_{\leqslant r}p$ and $B_{<r}p$
respectively the closed and the open ball of radius $r$ centered at $p$.

We numerate the formulas independently in each section.

\subsection{Geodesics}
Considering a fixed metric space $M$ we denote by $|p\-q|$
the distance between points $p,q\in M$.\par
A \it geodesic path \rm or simply a \it geodesic \rm
is a map of the form $[0,L]\overset{\gamma}\to M$, where $L\in\Bbb R_{\geqslant0}$,
such that $|\gamma(0)\-\gamma(x)|=x$ for all $x\in[0,L]$. The number $L$ is called the \it length \rm
of the geodesic $\gamma$ and is denoted by $\ell\gamma$ or by $\ell_\gamma$.
The ordered pair $\partial\gamma{\leftrightharpoons}(\gamma(0),\gamma(\ell_\gamma))$ it the \it boundary \rm of $\gamma$.
The points $\gamma(0)$ and $\gamma(\ell_\gamma)$ are the \it endpoints \rm of $\gamma$.

A space $M$ is \it geodesic \rm if, for every $(p,q)\in M^2=M\times M$, there exists a geodesic $\gamma$
with $\partial\gamma=(p,q)$.
\subsection{Geodesic bigons}
In a metric space $M$, a (geodesic) \it bigon \rm is an ordered pair $\beta{=}(\beta_0,\beta_1)$
of geodesics with the same boundary. The geodesics $\beta_\iota$ are the \it sides \rm$\beta$.

 Denote by $\s{gb}(M)$ the set of geodesic bigons in $M$.

For $\beta\in\s{gb}(M)$, the \textit{length} of $\beta$ is the value $\ell(\beta_0)=\ell(\beta_1)$.\newline
The function $\s w_\beta:[0,\ell_\beta]\ni s\mapsto|\beta_0(s)\-\beta_1(s)|$ is the \it width function \rm
of $\beta$.

For a constant $C\geqslant0$, we say that \it the bigons in $M$ are $C$-thin \rm
if $\s w_\beta\leqslant C$ for every $\beta\in\s{gb}M$.

It is easy to prove that if $M$ is $\delta$-hyperbolic then the bigons in $M$ are $C$-thin
for some constant $C$ that depends only on $\delta$.
The converse is true if $M$ is a metric graph (see below).
\vskip3pt
As in \cite{GP24} we will need the following simple lemma.
\vskip3pt
\sc Lemma\sl.
If $\beta=(\beta_0,\beta_1)\in\s{gb}M$, $s\in[0,\ell_\beta]$ and $\s w_\beta(s)=W$,
then the distance $d$ from $\beta_\iota(s)$ to $\s{Im}\beta_{1-\iota}$
is at least $W/2$\rm.
\vskip3pt
\textit{Proof.}
Let $t$ be such that $|\beta_\iota(s)\-\beta_{1-\iota}(t)|=d$.
For the triangle with vertices
$\beta_0(0){=}\beta_(0),\beta_\iota(s),\beta_{1-\iota}(t)$,
the triangle inequality gives:
$d\geqslant|s-t|$.

For the triangle with vertices
$\beta_0(s),\beta_1(s),\beta_1(t)$
the triangle inequality gives:\newline
 $W=|\beta_\iota(s)\-\beta_{1-\iota}(s)|\leqslant
|\beta_\iota(s)\-\beta_{1-\iota}(t)|+|\beta_{1-\iota}(t)\-\beta_{1-\iota}(s)|=d+|s-t|$.

The two inequalities imply $W\leqslant2d$.\hfill$\square$
\subsection{Metric graphs and normalization} A \it metric graph \rm is a one-dimentional connected $\s{CW}$-complex
equipped with a path-metric. By defaults the length of every edge of a graph is equal to one.

Our main tool to prove the hyperbolicity is the following theorem.
\vskip3pt
\textbf{Theorem P.} (P. Papasoglu \cite{Pap95}) \sl
If, in a metric graph $\Gamma$, the bigons are $C$-thin then $\Gamma$ is $\delta$-hyperbolic
for some constant $\delta$ that depends only on $C$\rm.
\vskip3pt
Since we are going to study random walks as stochastic processes with discrete time and the state space
being the set of vertices of a graph, we need to adapt Papasoglu's ``continuous'' approach
to the discrete case. Namely we need to restrict ourselves to the bigons $(\beta_0,\beta_1)$ with
$\beta_0(0){=}\beta_1(0)$ being a vertex of a graph. (We do not need that the final point
$\beta_0(\ell_\beta)=\beta_1(\ell_\beta)$ is a vertex.)

However we allow loops and muliple edges.

Let us call a geodesic bigon $(\beta_0,\beta_1)$ in a metric graph $\Gamma$ \textit{normalized}
if its initial point $\beta_0(0)=\beta_1(0)$ is a vertex of $\Gamma$.
\vskip3pt

Let us call a geodesic bigon $(\beta_0,\beta_1)$ \textit{regular} if the equation $\beta_0(t)=\beta_1(t)$ is
satisfied only at the initial point and the final point.

If all regular bigons in a graph are $C$-thin then all bigons are $C$-thin.
 So one can consider only regular bigons in
the statement of Theorem P.

The union of the sides of a regular bigon is a topological circle in a graph. Thus the length of such circle
is integer and hence the length of a regular bigon is a multiple of $1\over2$.

The following simple lemma allows one to require in Papasoglu's theorem that the bigons are regular.
\vskip3pt
\textsc{Lemma.} \textsl{If, in a metric graph $\Gamma$ all regular normalized bigons are $C$-thin then all geodesic
bigons in $S$ are $C{+}1$-thin.}
\vskip3pt
\textit{Proof.}
Consider a regular bigon $\beta=(\beta_0,\beta_1)$ with endpoints $p,q$ neither of which is
 a vertex of $\Gamma$.
Let us declare the initial point of the bigon one of $p,q$
which is not further away from the vertices of $\Gamma$
than the other.
That is, if $|p\hbox{-}q|={m\over2}$ and $m$ is odd then this minimal distance (denote it by $\delta$) ia at most $1\over4$,
and if $m$ is even then $\delta$ is at most $1\over2$.

Without loss of generality we can assume
that $|p\hbox{-}p'|=\delta$\begin{picture}(0,0)(47,-17)
\put(20,-100){
\pdfximage width 200pt{\mpPath 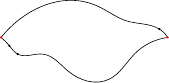}\pdfrefximage\pdflastximage}
\put(211,-32){$p$}
\put(225,-49){$p'$}
\put(45,-63){$q$}
\put(35,-52){$q'$}
\put(133,-110){\makebox(0,0)[cc]{Figure 1}}
\end{picture}\hfil\penalty-10000
 and $p'$ is a vertex of $\Gamma$
 belonging to the side $\beta_0$. Let\hfil\penalty-10000
 $q'$ be the unique point in
 the closure of  the edge\hfil\penalty-10000
 containing $q$ such that
 $|p'\hbox{-}q'|=|p\hbox{-}q|$
(see Figure 1).\hfil\penalty-10000

Since the bigon $\beta$ is regular, the edge  containing $q$\hfil\penalty-10000
 is not a loop.
Let $\beta'=(\beta_0',\beta_1')$ be
 the bigon obtaned
 from $\beta$\hfil\penalty-10000
 by the ``rotation'' of $\beta$ in the
 direction
from $p$ to $p'$
 and hence\hfil\penalty-10000
 from $q$ to $q'$.
The bigon $\beta'$
 is geodesic, regular,
 and normalized.\hfil\penalty-10000
If the width of $\beta'$ is at most $C$
 then, since $\delta\leqslant{1\over2}$, by triangle\hfil\penalty-10000
inequality, the width of $\beta$ is at most $C+1$.\hfill$\square$
\vskip3pt
We also need to normalize the balls.
A ball in $\Gamma$ is called \textit{normalized} if its center is a vertex of $\Gamma$.

It follows from the triangle inequality that every ball of radius $r$ contains a normalized ball of radius $r-1$.
Thus if a graph satisfies $\s{TA}_{\varepsilon,r}$ for the normalized balls then it satisfies
$\s{TA}_{\varepsilon,r+1}$ for all balls.
\subsection{Admissible probability measures on groups}
Let $G$ be an arbitrary (discrete) group and let $\mu$ be a probability measure on $G$.\par
\textbf{Definition.} A measure $\mu$ is \textit{admissible} if the following conditions hold:\hfil\penalty-10000
--- $\mu$ is \it symmetric\rm: $\mu(g)=\mu(g^{-1})$ for every $g\in G$;\hfil\penalty-10000
--- the \it support \rm$\s{supp}\mu\leftrightharpoons\mu^{-1}\Bbb R_{>0}$ is finite:\hfil\penalty-10000
--- $\s{supp}\mu$ is a generating set for $G$.

If a group possesses an admissible measure then it is finitely generated.

A pair $(G,\mu)$ is \it admissible \rm if $G$ is a group and $\mu$ is an admissible probability measure on $G$.
\subsection{Words and paths in the Cayley graph}
In the subsections 2.6--2.9 we consider a fixed admissible pair $(G,\mu)$.
Denote
$\Cal S\leftrightharpoons\s{supp}\mu$.

Let $\langle\Cal S\rangle$ denote the monoid freely generated by $\Cal S$.
The elements of $\langle\Cal S\rangle$ are the words in the alphabet $\Cal S$, including
 the empty word that we denote by $\lozenge$.
For $\psi\in\langle\Cal S\rangle$, denote $\ell_\psi\leftrightharpoons$the length of $\psi$,
that is the number of letters in $\psi$.
So, $\ell_\psi=0\Leftrightarrow\psi=\lozenge$.

Since $\mu$ is symmetric the inversion $s\mapsto s^{-1}$ on $\Cal S$
extends over the operation $\psi\mapsto\psi^{-1}$ on $\langle\Cal S\rangle$
with $(\psi^{-1})^{-1}=\psi$ and $\psi{\cdot}\psi^{-1}=\lozenge\Leftrightarrow\psi=\lozenge$.

We define a left invariant partial order on $\langle\Cal S\rangle$:
 $\psi\leqslant_{\fam0l}\psi_1$ if $\psi$ is an initial segment of $\psi_1$.\newline
 So $\lozenge\leqslant_{\fam0l}\psi\leqslant_{\fam0l}\psi$.

We also need the right invariant partial order $\psi\leqslant_{\fam0r}\psi_1$ if $\psi^{-1}\leqslant_{\fam0l}\psi_1^{-1}$.
We need the following simple observation.
\vskip3pt
\sc Lemma\sl.
For $\sigma,\sigma_1,\theta,\theta_1\in\langle\Cal S\rangle$,
if $\sigma,\sigma_1$ are incomparable with respect to the order
`$\leqslant_{\fam0 l}$'
or $\theta,\theta_1$  are incomparable with respect to the order
`$\leqslant_{\fam0 r}$',
then $\sigma\langle\Cal S\rangle\theta\cap\sigma_1\langle\Cal S\rangle\theta_1=\varnothing$\rm.
\vskip3pt
\textit{Proof.}
For $\psi\in\sigma\langle\Cal S\rangle\theta\cap\sigma_1\langle\Cal S\rangle\theta_1$
there exist $\varphi,\varphi_1\in\langle\Cal S\rangle$
such that $\psi=\sigma\varphi\theta=\sigma_1\varphi_1\theta_1$,
so $\sigma$ and $\sigma_1$ are comparable with respect to `$\leqslant_{\fam0 l}$'
and $\theta$ and $\theta_1$ are comparable with respect to `$\leqslant_{\fam0 r}$'.\hfill$\square$
\vskip3pt
For $\psi\in\langle\Cal S\rangle$ define $\psi^\downarrow\leftrightharpoons\{\sigma\in\langle\Cal S\rangle:\sigma\leqslant_l\psi,\sigma\ne\psi\}$.
In particular, $\psi^\downarrow=\varnothing\Leftrightarrow\psi=\lozenge$.

For a word $\psi=\prod_{i=1}^{\ell\psi}s_i\in\langle\Cal S\rangle$ ($s_i{\in}\Cal S$)
 denote by $\boldsymbol\pi(\psi)=\boldsymbol\pi_\psi$ the value of the word $\psi$ in $G$.
In particular $\boldsymbol\pi_{\lozenge}=\boldsymbol\iota\leftrightharpoons\boldsymbol\iota_G$
the neutral element of $G$.

So, we have the homomorphism
$\langle\Cal S\rangle\overset{\boldsymbol\pi}\to G$.
The \it word norm \rm$|g|$ (with respect to $\Cal S$) is the number $\s{min}\{\ell_\psi:\psi{\in}\pi^{-1}g\}$.

The word norm induces the left invariant \it word distance \rm $|x\-y|=|x^{-1}y|$.

We interprete the elements of the set $G{\times}\langle\Cal S\rangle$
as paths in the Caley graph $\s{Ca}(G,\Cal S)$: $(x,\psi)\in G{\times}\langle\Cal S\rangle$
is a path from the vertex $x$ to the vertex $x{\cdot}\boldsymbol\pi_\psi$.

Let $\s{Pth}(x,z)\leftrightharpoons\{x\}{\times}\boldsymbol\pi^{-1}(x^{-1}z)$.
We will often identify the paths in $\s{Pth}(x,z)$ and the words in $\boldsymbol\pi^{-1}(x^{-1}z)$
without confusing.
\subsection{Weight of paths and convolution powers}
For a word $\psi=s_1\dots s_n$ ($s_i\in\Cal S$) define its $\mu$\it-weight \rm as $\psi_\mu{\leftrightharpoons}\prod_{i=1}^n\mu(s_i)$.
We postulate that the $\mu$-weight of the empty word $\lozenge$ is 1.

For $n\in\Bbb N$ define a probability measure on $G$ called the $n$\it-th convolution power \rm of $\mu$ as follows:\newline
$\mu^{(n)}(g)\leftrightharpoons\sum\{\mu_\psi:\psi\in\pi^{-1}g,\ell_\psi=n\}$ ($g\in G$).

In particular, $\mu^{(1)}{=}\mu$ and
$\mu^{(0)}$ is the Dirac probability measure concentrated at the single element $\boldsymbol\iota_G$.

In the context of random walk the number $\mu^{(n)}(g)$ is interpreted as the probability
of reaching the vertex $g$ in exactly $n$ steps of the random walk started at $\boldsymbol\iota_G$.

In a similar way, as for the word norm and the word metric, it is convenient
to consider the two-variable version of the convolution powers:\hfil\penalty-10000
 $\mu^{(n)}(x,y)\leftrightharpoons\mu^{(n)}(x^{-1}y)$
and interprete them as the probability of reaching the vertex $y$
of the Cayley graph in exactly $n$ steps of the random walk started at the
vertex $x$.
\subsection{Transience}
A group is called \it transient \rm with respect to a measure $\mu$ if the sum\hfil\penalty-10000
$\s{Gr}_\mu(g)\leftrightharpoons\sum_n\mu^{(n)}(g)=\sum\{\psi_\mu:\psi\in\pi^{-1}g\}$ is finite
for some, and therefore for any element $g\in G$.

The two-variable version: $\s{Gr}_\mu(x,y)\leftrightharpoons\s{Gr}_\mu(x^{-1}y)$.

It turns out that the property of transience does not depend on the choice of admissible measure $\mu$.
Namely, by N. Varopoulos theorem \cite{Va86}, a finitely generated group is not  transient if and only if
it is virtually abelian of rank at most two.
\subsection{Green probability space of paths with fixed endpoints}
Assuming that the group $G$ is transient, on each set $\pi^{-1}g$ ($g\in G$),
define the probability measure $\Bbb P_{\boldsymbol\iota,g}$
such that $\Bbb P_{\boldsymbol\iota,g}\{\psi\}=\Frac{\psi_\mu}{\s{Gr}_\mu(g)}$.

Similarly, we have a two-variable version $\Bbb P_{x,z}$ on $\s{Pth}(x,z)$.

We need the following expression for conditional probability.
For $\sigma,\theta\in\langle\Cal S\rangle$ and $g\in G$, denote
 $h_{g,\sigma,\theta}\leftrightharpoons\boldsymbol\pi(\sigma^{-1})g\boldsymbol\pi(\theta^{-1})$.\hfil\penalty-10000
$\Cal H_{g,\sigma,\theta}\leftrightharpoons
(\sigma{\cdot}\langle\Cal S\rangle{\cdot}\theta)\cap\boldsymbol\pi^{-1}g=
\sigma{\cdot}\boldsymbol\pi^{-1}(h_{g,\sigma,\theta}){\cdot}\theta$.
\vskip3pt
\sc Lemma\sl.
For every $g{\in}G$, every $\Cal A\subset\boldsymbol\pi^{-1}g$,
and every $\sigma,\theta\in\langle\Cal S\rangle$ one has\newline
$\Bbb P_{\boldsymbol\iota,g}(\Cal A|\Cal H_{g,\sigma,\theta})=
\Bbb P_{\boldsymbol\iota,h_{g,\sigma,\theta}}\{\varphi\in\boldsymbol\pi^{-1}h_{g,\sigma,\theta}:
\sigma\varphi\theta\in\Cal A\}$\rm.
\vskip3pt
\textit{Proof.}
It follows from the definitions that
$\boldsymbol\pi^{-1}(h_{g,\sigma,\theta})=
\{\varphi\in\langle\Cal S\rangle:\sigma\varphi\theta\in\boldsymbol\pi^{-1}g\}$ and\newline
 $\Cal A\cap\Cal H_{g,\sigma,\theta}=
\sigma{\cdot}\{\varphi\in\langle\Cal S\rangle:\sigma\varphi\theta\in\Cal A\}{\cdot}\theta$.

For $\varphi\in\langle\Cal S\rangle$ such that $\sigma\varphi\theta\in\Cal A$ we have
$\Bbb P_{\boldsymbol\iota,g}\{\sigma\varphi\theta\}=\Frac{\mu_\sigma\mu_\varphi\mu_\theta}{\s{Gr}_\mu(g)}$,
hence
$\Bbb P_{\boldsymbol\iota,g}(\Cal A\cap\Cal H_{g,\sigma,\theta})=
\Frac{\mu_\sigma{\cdot}\sum\{\mu_\varphi:\sigma\varphi\theta\in\Cal A\}{\cdot}\mu_\theta}{\s{Gr}_\mu(g)}=
\Frac{\mu_\sigma\s{Gr}_\mu(h_{g,\sigma,\theta})\mu_\theta}{\s{Gr}_\mu(g)}{\cdot}
\Bbb P_{\boldsymbol\iota,h_{g,\sigma,\theta}}\{\varphi\in\boldsymbol\pi^{-1}h_{g,\sigma,\theta}:
\sigma\varphi\theta\in\Cal A\}$.

Applying this to the set $\Cal A\leftrightharpoons\boldsymbol\pi^{-1}g$
we obtain
$\Bbb P_{\boldsymbol\iota,g}\Cal H_{g,\sigma,\theta}=
\Frac{\mu_\sigma\s{Gr}_\mu(h_{g,\sigma,\theta})\mu_\theta}{\s{Gr}_\mu(g)}$,\hfil\penalty-10000
$\Bbb P_{\boldsymbol\iota,g}(\Cal A|\Cal H_{g,\sigma,\theta}){=}
\Frac{\Bbb P_{\boldsymbol\iota,g}(\Cal A\cap\Cal H_{g,\sigma,\theta})}{\Bbb P_{\boldsymbol\iota,g}\Cal H_{g,\sigma,\theta}}=
\Bbb P_{\boldsymbol\iota,h_{g,\sigma,\theta}}\{\varphi\in\pi^{-1}h_{g,\sigma,\theta}:\sigma\varphi\theta\in\Cal A\}$.
\hfill$\square$
\section{Ancona properties}
\subsection{Weak Ancona's property}
For elements $x,z\in G$ define\newline
 $[x,z]\leftrightharpoons\{y\in G:|x\-y|+|y\-z|=|x\-z|\}$ the set of points belonging to the
\it word-geodesic intervals \rm between $x $ and $z$.

The following \it weak Ancona's property \rm is essential for what follows. It depends on parameter $\varepsilon\in(0,1)$.
\vskip3pt
$\s{WA}_\varepsilon:$ for every $x,z\in G,y\in[x,z]$
 one has
$\Bbb P_{x,z}\{\psi\in\s{Pth}(x,z):y{\in}\s{Im}\psi\}\geqslant 1-\varepsilon$.
\vskip3pt
The main result of \cite{Anc88} is the following
\vskip3pt
\textbf{Theorem} (A. Ancona \cite[Thm. 6.1]{Anc88}) \sl Let $(G,\mu)$ be an admissible pair such that the group $G$ is hyperbolic.
Then there exists $\varepsilon\in(0,1)$
such that $\s{WA}_\varepsilon$ holds for \rm$(G,\mu)$.
\vskip3pt
\textbf{Remark.} One can restate property $\s{WA}_\varepsilon$ in terms of the Green distance
$\displaystyle d_G(x,y)=-\ln {\s{Gr}_\mu(x,y)\over \s{Gr}_\mu(\iota, \iota)}$ as follows.
  By \cite[Lemma 1.13]{Wo00} one has
 {$\displaystyle \Bbb P_{x,z}\{\psi:y{\in}\s{Im}\psi\}={\s{Gr}_\mu(x,y){\cdot}\s{Gr}_\mu(y,z)\over \s{Gr}_\mu(x,z)}.$}\newline So $\s{WA}_\varepsilon$ is equivalent to the following inequality: \newline

\centerline{$\s{Gr}_\mu(x,z)\leqslant C\s{Gr}_\mu(x,y)\cdot \s{Gr}_\mu(y,z),\ y\in [x,z],\ C=\Frac 1{1-\varepsilon}.$}
\vskip3pt
 By taking logarithm of the both sides of it, we obtain the following equialent form of condition $\s{WA}_\varepsilon$:
 the Gromov product $(x\cdot_y z)$ with respect to the Green metric
for $y \in [x,z]$ is uniformly bounded:\vskip3pt
\centerline{$d_G(x,y)+d_G(y,z)\leqslant d_G(x,z)+D,$}
\vskip3pt
where $D$ not depending on $x, y$ and $z.$ 
\subsection{Thick Ancona's property} Our Theorem 3 is a weaker form of this conjecture.
We replace the property $\s{WA}_\varepsilon$
by the following stronger {\sl Thick Ancona's property} (briefly $\s{TA}$-property).
This property contains two numeric parameters $r$ and $\varepsilon$.
\vskip3pt
$\s{TA}_{\varepsilon,r}:$ For every $x,z\in G$ and every $y\in[x,z]$
 one has $\Bbb P_{x,z}\{\psi\in \s{Pth}(x,z):B_{\leqslant r}(y)\cap\s{Im}\psi\ne\varnothing\}>1-\varepsilon$.
\vskip3pt
Recall that $B_{\leqslant r}y$ denote the closed ball of radius $r$ centered at $y$.
\vskip3pt

Obviously, property $\s{WA}_\varepsilon$ coincides with property $\s{TA}_{\varepsilon,0}$.
On the other hand we have the following Lemma.

 \sc Lemma\sl.   The property $\s{TA}_{\varepsilon,r}$ implies
$\s{WA}_{\varepsilon'}$ where $\varepsilon'$ depends on $\varepsilon$ and $r$\rm.
\begin{proof} It follows from the argument given in the proof of \cite[Proposition 2.4]{GGPY21}.
 Indeed according to the Remark above, for $y\in[x,z]$ we have\newline
 $\Bbb P_{x,z}\{\psi\in \s{Pth}(x,z):B_{\leqslant r}(y)\cap\s{Im}\psi\ne\varnothing\}\leqslant\displaystyle\sum_{w\in B
_r(y)}{\s{Gr}_\mu(x,w){\cdot}\s{Gr}_\mu(w,z)\over\s{Gr}_\mu(x,z)}$.\newline
Thus, $\s{Gr}_\mu(x,y)<(1-\varepsilon)\displaystyle\sum_{w\in B_r(y)}\s{Gr}_\mu(x,w){\cdot}\s{Gr}_\mu(w,z)$.
 Since $|w\-y|\leqslant r$,
by Harnack	nequality \cite[(25.1)]{Wo00} we have
 \vskip3pt
 \centerline{$L^{-r}\s{Gr}_\mu(x,w)\leqslant\s{Gr}_\mu(x,y)\leqslant L^{r}\s{Gr}_\mu(x,w),$}
\noindent
where $L>1$ is a uniform constant. \vskip3pt Thus \vskip3pt
\centerline{$\s{Gr}_\mu(x,z)\leqslant (1-\varepsilon)L^{2r}|B_{\leqslant r}(y)|\s{Gr}_\mu(x,y){\cdot}\s{Gr}_\mu(y,z).$}
\vskip3pt
Here $|B_{\leqslant r}(y)|$ is the number of vertices in $B_{\leqslant r}(y)$.
Thus property $\s{WA}_{\varepsilon'}$ holds for\hfil\penalty-10000
 $\varepsilon'=1-(1-\varepsilon) L^{2r} |B_{\leqslant r}(y)|$.
\end{proof}
\vskip3pt
\begin{prop}
Let $G$ be a nonelementary hyperbolic group and let $\mu$ be an admissible measure on $G$.
Then for every $\varepsilon{\in}(0,1)$ there exists $r>0$ such that the property $\s{TA}_{\varepsilon,r}$   holds.
\end{prop}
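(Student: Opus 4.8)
The plan is to obtain $\s{TA}_{\varepsilon,r}$ (for a suitably large $r$) from the fact that, in a hyperbolic group, the conditioned random walk fellow-travels word geodesics: a $\Bbb P_{x,z}$-path crosses every fixed ball centred on $[x,z]$ with probability close to $1$. Fix $\varepsilon\in(0,1)$; Ancona's theorem gives $\s{WA}_{\varepsilon_1}$ for some $\varepsilon_1\in(0,1)$. First I discard the trivial configurations. If $|x\-y|\leqslant r$ then $x\in B_{\leqslant r}(y)$, and since $x\in\s{Im}\psi$ for every $\psi\in\s{Pth}(x,z)$ we get $\Bbb P_{x,z}\{\psi:B_{\leqslant r}(y)\cap\s{Im}\psi\ne\varnothing\}=1$; the same holds if $|y\-z|\leqslant r$. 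Hence it is enough to treat $x,z\in G$, $y\in[x,z]$ with $|x\-y|>r$ and $|y\-z|>r$, and to prove that in this range $\Bbb P_{x,z}\{\psi:B_{\leqslant r}(y)\cap\s{Im}\psi=\varnothing\}$ tends to $0$ as $r\to\infty$, uniformly in $(x,y,z)$.

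The ingredient I would invoke is the quantitative geodesic-tracking estimate for hyperbolic groups: there exist $C\geqslant1$ and $\lambda>0$, depending only on $(G,\mu)$, such that for all $x,z\in G$, every $y$ on a word geodesic $[x,z]$, and all $t\geqslant0$,
\[
\Bbb P_{x,z}\{\psi\in\s{Pth}(x,z):B_{\leqslant t}(y)\cap\s{Im}\psi=\varnothing\}\ \leqslant\ Ce^{-\lambda t}.
\]
This lies within Ancona's circle of results: since $G$ is hyperbolic, the Green metric $d_G$ is itself hyperbolic and is ``aligned'' with the word metric along geodesics (this is the content of $\s{WA}_{\varepsilon_1}$ in its multiplicative form $\s{Gr}_\mu(x,z)\leqslant(1-\varepsilon_1)^{-1}\s{Gr}_\mu(x,w)\s{Gr}_\mu(w,z)$ for $w\in[x,z]$), and one then derives the displayed bound by combining this inequality with the Harnack inequality and the $\delta$-thinness of geodesic triangles --- iterating the Ancona inequality along $[x,z]$ through a chain of points on $[x,y]$ spaced by a constant $\ell=\ell(\delta)$, one controls, uniformly in $(x,y,z)$, the cost of a ``detour'' of the conditioned walk around $B_{\leqslant t}(y)$. (Equivalently, the estimate is a consequence of Ancona's identification of the Martin boundary of $(G,\mu)$ with the Gromov boundary of $G$, via the compactness of the latter.) I would supply a proof of the displayed inequality along the first of these lines.

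Granting it, the proposition follows immediately: choose $r$ with $Ce^{-\lambda r}<\varepsilon$; then for all $(x,y,z)$ with $|x\-y|,|y\-z|>r$ the probability of missing $B_{\leqslant r}(y)$ is $<\varepsilon$, and together with the trivial configurations this is $\s{TA}_{\varepsilon,r}$. The whole difficulty sits in the tracking estimate: $\s{WA}_{\varepsilon_1}$ by itself only delivers the fixed constant $\varepsilon_1$, i.e. $\s{TA}_{\varepsilon_1,0}$, and improving it to a quantity that vanishes as $r\to\infty$ is exactly where one must use that $G$ is genuinely hyperbolic rather than merely satisfies an abstract Ancona-type inequality. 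So the main obstacle I expect is the uniform control, in the first-mentioned derivation, of the excursions of the conditioned walk away from the word geodesic.
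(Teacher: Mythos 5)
Your reduction to the nontrivial configurations $|x\-y|,|y\-z|>r$ is fine, and you have correctly identified that the whole proposition rests on a uniform tracking estimate of the form $\Bbb P_{x,z}\{\psi:B_{\leqslant t}(y)\cap\s{Im}\psi=\varnothing\}\leqslant Ce^{-\lambda t}$ for $y\in[x,z]$. But that estimate is precisely the statement to be proved (indeed a quantitatively stronger one --- for Proposition 1 a bound merely tending to $0$ as $t\to\infty$, uniformly in $(x,y,z)$, would suffice), and you do not prove it: you offer two possible derivations (``iterating the Ancona inequality along $[x,z]$ through a chain of points spaced by $\ell(\delta)$'', or ``via compactness of the Gromov boundary'') and explicitly defer both, while acknowledging that ``the whole difficulty sits in the tracking estimate.'' The iteration you sketch is not routine: the multiplicative Ancona inequality $\s{Gr}_\mu(x,z)\leqslant C\,\s{Gr}_\mu(x,w)\s{Gr}_\mu(w,z)$ controls the Green function at points \emph{on} the geodesic, whereas the event of missing $B_{\leqslant t}(y)$ concerns excursions \emph{away} from it; converting the former into an exponentially small bound on the latter requires a genuine argument (a first-passage decomposition over a separating set together with summable Green-function estimates off the geodesic), none of which is supplied. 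So as written the proposal is a correct reduction with its central lemma missing.

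For comparison, the paper does not attempt to re-derive such an estimate from Ancona's inequality. It quotes Theorem 5.2 of \cite{GGPY21}, which already provides the tracking estimate in the form: for every $\varepsilon$ and every $d>0$ there is $r$ such that $\boldsymbol\delta_y^f(x,z)>d$ implies $\Bbb P_{x,z}\{\psi:B_r(y)\cap\s{Im}\psi\ne\varnothing\}>1-\varepsilon$, where $\boldsymbol\delta_y^f$ is a Floyd distance. The only thing left to prove --- and the actual content of the paper's proof --- is that for a hyperbolic group and $y\in[x,z]$ the quantity $\boldsymbol\delta_y^f(x,z)$ is bounded below by a uniform $d>0$. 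This is done softly: taking $f(n)=\lambda^n$ so that the Floyd compactification coincides with the Gromov compactification \cite[7.2M]{Gr87}, one argues by contradiction that $\boldsymbol\delta_{\boldsymbol\iota}^f(x_n,z_n)\to0$ with $\boldsymbol\iota\in[x_n,z_n]$ would force $(x_n{\cdot}_{\boldsymbol\iota}z_n)\to\infty$, whereas this Gromov product is $0$. If you want to make your argument self-contained you would either have to carry out the quantitative derivation you allude to (essentially reproving a result of \cite{GGPY21}), or follow the paper and combine the cited theorem with the compactness argument above.
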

\begin{proof} To use a result of \cite{GGPY21}
we briefly recall the description of the Floyd distance $\boldsymbol\delta_y^f(x,z)$
between vertices of a connected graph.

 Let $f:\mathbb N\to\mathbb R_{>0}$ be a non-increasing function
(called \textsl{Floyd function}) satisfying the conditions:\newline
$\displaystyle\ \sum _{n\in\mathbb N}f(n)<\infty$ and
$\Frac{f(n)}{f(n+1)}$ is bounded.

We  rescale the canonical distance by assigning to an edge $e$ of the Cayley graph the length $f(\s d(y,e))$.
 The Floyd length of a path is the sum of the  rescaled lengths of all its edges.
The Floyd distance $\boldsymbol\delta_y^f(x,z)$ is the  minimum of the lengths of paths between $x$ and $z$ (i.e. the shortest path distance).

The Cauchy completion $\overline G_f$
of the Cayley graph with respect to the Floyd distance,
is called \textit{Floyd completion}
(with respect to $f$ and $y$). It is compact since the graph has bounded degree,
and does not depend on the vertex $y$. The space
$\partial_f G=\overline G_f\setminus G$ is called \textit{Floyd boundary}.

Theorem 5.2. of \cite{GGPY21} claims that if $G$ is a finitely generated and $f$ is a Floyd function then\hfil\penalty-10000
for every $\varepsilon\in(0,1)$ and every $d>0$ there exists $r>0$ such that for every $x,y,z\in G$
for which $\boldsymbol\delta_y^f(x,z)>d$ one has
 $\Bbb P_{x,z}\{\psi:B_r(y)\cap\s{Im}\psi\ne\varnothing\}>1-\varepsilon$.

Note that in this Theorem the radius $r$ depends on two parameters $\varepsilon$ and $d$. To apply it, we need to show
that, in our case, the parameter $d$ is a uniform positive constant, i.e. once $G$ is hyperbolic and $y\in[x,z]$ then
$\boldsymbol\delta_y^f(x,z)>d>0$ where $d$  does not depend on $x,z,y$.

By Gromov's remark \cite[7.2M]{Gr87} there exists $\lambda\in]0,1[$ such that the Floyd compactification $\overline G_f$
of $G$ with respect to the scaling function $f(n)\leftrightharpoons\lambda^n$, coincides with
the Gromov compactification $G\sqcup\partial_\infty G$.

Suppose by contradiction that such uniform constant $d$ does not exist for $f$.
 Thus there exist sequences $x_n$, $y_n$, $z_n$ with
$y_n\in[x_n,z_n]$ such that $\boldsymbol\delta_{y_n}^f(x_n,z_n)\to 0$ as $n\to\infty$.

Without loss of generality we can assume that $y_n=\boldsymbol\iota$.
By compactness of $\overline G_f$, we can pass to subsequences
such that $x_n\to x_\infty$, $z_n\to z_\infty$ for some points $x_\infty$ and $z_\infty$ in the
Floyd boundary $\partial_fG$.
Since $\boldsymbol\delta_{y_n}^f(x_n,z_n)\to 0$ we have $x_\infty=z_\infty$.

By definition of the Gromov boundary $\partial_\infty G$ this implies that the Gromov product
$(x_n{\cdot}_{\boldsymbol\iota}z_n)$ tends to $\infty$.
On the other hand, since $\boldsymbol\iota\in[x_n,z_n]$ we have $(x_n{\cdot}_{\boldsymbol\iota}z)=0$.
This contradiction completes the proof.
\end{proof}
\section{Rate of transience} We need the following well-known characterizatoin
of amenable groups \cite{Kes59}:\hfil\penalty-10000
 a finitely generated group $G$ is amenable if and only if, for some (=for any)
admissible measure $\mu$ on $G$, the \textit{spectral radius} $\boldsymbol\rho(G,\mu)\leftrightharpoons
\underset{n\rightarrow\infty}{\s{lim\,sup}}(\mu_{\boldsymbol\iota}^{(n)})^{1/n}=1$.

If $G$ is non-amenable then $\boldsymbol\rho(G,\mu)<1$ for every admissible $\mu$.
Thus in the class of transient groups the amenable ones can be characterized as   groups with
the slowest transience.
Every non-amenable group is transient.

The following fact is well-known. Since it is sometimes formulated in more general settings,
for the reader's convenience we give a proof.
\begin{prop}
Let $(G,\mu)$ be an admissible pair with $G$ nonamenable and $\rho\leftrightharpoons\boldsymbol\rho(G,\mu)$
be the spectral radius.
 Then, for every $g\in G$ and $m\geqslant|g|$, one has\newline
$\s{NA}:$\quad $\Bbb P_{x,y}\{\psi\in\pi^{-1}(x^{-1}y):\ell_\psi\geqslant m\}\leqslant
\rho^{m-D|x-y|-N}$ where
$D\leftrightharpoons\s{log}_\rho\s{min}(\mu|_{\Cal S})$,\newline
$N\leftrightharpoons\s{log}_\rho(\s{Gr}_\mu(\boldsymbol\iota)(1-\rho))\geqslant0$.
\end{prop}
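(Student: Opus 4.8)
The plan is to pass, by left-invariance of the word metric and of the Green measures $\Bbb P$, to the single element $g\leftrightharpoons x^{-1}y$, for which $|x\-y|=|g|$ and
\[
\Bbb P_{x,y}\{\psi:\ell_\psi\geqslant m\}=\Frac{\sum_{n\geqslant m}\mu^{(n)}(g)}{\s{Gr}_\mu(g)}.
\]
It then suffices to bound the numerator from above and the denominator from below, both by elementary means.

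For the numerator I would invoke the classical uniform estimate $\mu^{(n)}(g)\leqslant\rho^{\,n}$, valid for every $n$ and every $g$ --- equivalently, $\rho$ is the $\ell^2$-operator norm of convolution by $\mu$. A self-contained derivation runs as follows: by symmetry of $\mu$ and Cauchy--Schwarz, $\mu^{(n)}(g)^2\leqslant\sum_h\mu^{(n)}(h)^2=\mu^{(2n)}(\boldsymbol\iota)$, and since $k\mapsto\mu^{(2k)}(\boldsymbol\iota)$ is supermultiplicative, Fekete's lemma yields $\mu^{(2k)}(\boldsymbol\iota)^{1/k}\leqslant\sup_j\mu^{(2j)}(\boldsymbol\iota)^{1/j}=\rho^{2}$. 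Summing the geometric series gives $\sum_{n\geqslant m}\mu^{(n)}(g)\leqslant\rho^{\,m}/(1-\rho)$. For the denominator I would fix a word $w_0\in\boldsymbol\pi^{-1}g$ of length $|g|$ and observe that $\varphi\mapsto w_0\varphi$ injects $\boldsymbol\pi^{-1}\boldsymbol\iota$ into $\boldsymbol\pi^{-1}g$ while multiplying $\mu$-weights by $(w_0)_\mu\geqslant\s{min}(\mu|_{\Cal S})^{|g|}$; summing over $\varphi$ gives $\s{Gr}_\mu(g)\geqslant\s{min}(\mu|_{\Cal S})^{|g|}\,\s{Gr}_\mu(\boldsymbol\iota)$.

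Combining the two estimates, $\Bbb P_{x,y}\{\psi:\ell_\psi\geqslant m\}\leqslant\Frac{\rho^{\,m}}{(1-\rho)\,\s{Gr}_\mu(\boldsymbol\iota)\,\s{min}(\mu|_{\Cal S})^{|g|}}$, and substituting $\s{min}(\mu|_{\Cal S})^{|g|}=\rho^{\,D|g|}$, $(1-\rho)\,\s{Gr}_\mu(\boldsymbol\iota)=\rho^{\,N}$ and $|g|=|x\-y|$ produces exactly the asserted bound $\rho^{\,m-D|x-y|-N}$. To see $N\geqslant0$, note that the same estimate gives $\s{Gr}_\mu(\boldsymbol\iota)=\sum_n\mu^{(n)}(\boldsymbol\iota)\leqslant\sum_n\rho^{\,n}=(1-\rho)^{-1}$, so $(1-\rho)\,\s{Gr}_\mu(\boldsymbol\iota)\leqslant1$, and $\log_\rho$ is nonnegative on $]0,1]$ because $0<\rho<1$; nonamenability is used precisely here, to guarantee $\rho<1$ so that $D$ and $N$ are defined. (The hypothesis $m\geqslant|g|$ is inessential: for $m<|g|$ the right-hand side already exceeds $1$, since $D\geqslant1$ from $\s{min}(\mu|_{\Cal S})\leqslant\mu(s)=\mu^{(1)}(s)\leqslant\rho$.) No real obstacle arises; the only step needing care is the passage from the $\s{lim\,sup}$ definition of $\rho$ to the pointwise bound $\mu^{(n)}(g)\leqslant\rho^{\,n}$, which is standard.
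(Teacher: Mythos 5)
Your proof is correct and follows essentially the same route as the paper's: the tail of the numerator is bounded by $\rho^m/(1-\rho)$ via the pointwise estimate $\mu^{(n)}(g)\leqslant\rho^n$ (which the paper simply cites from Woess, Lemma~1.9, whereas you rederive it from supermultiplicativity and Fekete's lemma), and the Green function is bounded below by $\s{min}(\mu|_{\Cal S})^{|g|}\,\s{Gr}_\mu(\boldsymbol\iota)$ --- your word-prepending injection $\varphi\mapsto w_0\varphi$ is just a repackaging of the paper's iterated one-step inequality $\mu^{(n)}(x,y)\geqslant\lambda\,\mu^{(n-1)}(x,z)$ along a geodesic. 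The verification of $N\geqslant0$ and the final substitution likewise match the paper exactly.
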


We call $\s{NA}$ the \it nonamenability inequality\rm.
\begin{proof}
By \cite[Lemma 1.9]{Wo00}, $\mu^{(n)}(x,x)\leqslant\rho^n$.
Hence\hfil\penalty-10000
$(1)$\quad$\s{Gr}_\mu(x,x)=
\sum_{n\in\Bbb N}\mu^{(n)}(x,x)\leqslant\sum_{n\in\Bbb N}\rho^n=\Frac1{1-\rho}$;\hfil\penalty-10000
$\s{Gr}_\mu(x,x)(1-\rho)\leqslant1$;\hfil\penalty-10000
$(2)$\quad$N=\s{log}_\rho(\s{Gr}_\mu(x,x)(1-\rho))\geqslant0$.

For $x,z\in G$, by symmetry of $\mu$, $\mu^{(n)}(x,z)=\mu^{(n)}(z,x)$.
Hence\hfil\penalty-10000
$(\mu^{(n)}(x,z))^2=\mu^{(n)}(x,z)\mu^{(n)}(z,x)\leqslant\mu^{(2n)}(x,x)\leqslant\rho^{2n}$,
and\hfil\penalty-10000
$(3)$\quad$\mu^{(n)}(x,z)\leqslant\rho^n$.

On the other hand, let\hfil\penalty-10000
$(4)$\quad$\lambda{\leftrightharpoons}\s{min}\{\mu_s:s{\in}\Cal S\}$.\hfil\penalty-10000
If $|x\-z|=|x\-y|-1$ and $|z\-y|=1$, then
 $\mu^{(n)}(x,y)\overset{(4)}\geqslant \mu^{(n-1)}(x,z){\cdot}\lambda$.\hfil\penalty-10000
By iterating the last inequality we get:\hfil\penalty-10000
$(5)$\quad $\mu^{(n)}(x,y)\geqslant\lambda^d\mu^{(n-d)}(x,x)$, where $d\leftrightharpoons|x$-$y|$.
From here we have\hfil\penalty-10000
$(6)$\quad$\s{Gr}_\mu(x,y){=}\sum_{i=d}^\infty\mu^{(i)}(x,y)\overset{(5)}\geqslant
\lambda^d\sum_{i=0}^\infty\mu^{(i)}(x,x)=\lambda^d\s{Gr}_\mu(x,x)$
(by the Harnack inequality).

Using the estimates $(3)$ and $(6)$, for $m\geqslant d$ we obtain:\hfil\penalty-10000
$\Bbb P_{x,y}\{\psi\in\pi^{-1}(x^{-1}y),\,\ell_\psi\geqslant m\}=$\hfil\penalty-10000
${=}\Frac1{\s{Gr}_\mu(x,y)}\sum_{i=m}^\infty\mu^{(i)}(x,y)\overset{(3)}\leqslant
\Frac1{\s{Gr}_\mu(x,y)}\sum_{i=m}^\infty\rho^i=$\newline$=
\Frac{\rho^m}{\s{Gr}_\mu(x,y)\cdot (1{-}\rho)}\overset{(6)}\leqslant
\Frac{\rho^m}{\s{Gr}_\mu(x,x)\cdot \lambda^d\cdot (1{-}\rho)}=\rho^{m-Dd-N}$,
where
$D{\leftrightharpoons}\s{log}_\rho\lambda$ and $N$ is from (2).
\end{proof}
\section{Proof of Theorems 1 and 2}
\subsection{Bypassing a ball}
Let $M$ be a geodesic metric space and let $I=[x,z]$ be a closed geodesic interval in $M$.
Denote by $B_I$ the open ball of radius $r_I\leftrightharpoons\ell_I/2$ centered at the middle point $y$ of $I$.
Denote by $\Delta_I$ the set of the rectifiable curves $\gamma$ in $M$
that join the endpoints of $I$ and miss $B_I$.

Denote by $\pi(I)$ the number $\Frac{\s{inf}\{\ell_\gamma:\gamma\in\Delta_I\}}{r_I}$
 (according to a
natural convention, $\pi(I)=\infty$ if $\Delta_I=\varnothing$).

Note that, if $M$ is an Euclidean space of dimension
 at least two then $\pi(I)=\pi=3.14159\dots$.

Denote $\pi(M)\leftrightharpoons\underset{\ell I\rightarrow\infty}{\s{lim\,inf}}\pi(I)$
 (by the same convention,
if $M$ is bounded then $\pi(M)=\infty$).

It is well-known, see for instance \cite[7.1A]{Gr87}, that,
 if all geodesic triangles in $M$
are $\delta$-thin for a positive
 $\delta$, then $\pi(I)\geqslant\delta(2^{r_I/\delta}-2)$.
In particular,
 if $M$ is hyperbolic
 then $\pi(M)=\infty$.

\begin{prop}{\rm (Theorem 1)}
If $\Gamma$ is a connected graph and
 $\pi(\Gamma)>10$ then $\Gamma$ is hyperbolic.
\end{prop}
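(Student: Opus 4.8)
The plan is to verify the hypothesis of Papasoglu's Theorem P: if $\Gamma$ is a metric graph whose geodesic bigons are $C$-thin for some constant $C$, then $\Gamma$ is hyperbolic. Thus it suffices to show that the condition $\pi(\Gamma)>10$ forces a uniform bound on the width function $\s w_\beta$ of every geodesic bigon $\beta$. By the normalization lemmas of Section 2.4 we may restrict attention to regular normalized bigons, and by the remark following Theorem P the union of the two sides of such a bigon is an embedded circle in $\Gamma$. Suppose, for contradiction, that widths are unbounded; we will produce geodesic intervals $I$ of arbitrarily large length with $\pi(I)\leqslant10$, contradicting $\pi(\Gamma)>10$.

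The key construction: take a regular bigon $\beta=(\beta_0,\beta_1)$ of length $\ell_\beta=L$ and let $s\in[0,L]$ be a parameter where the width $W=\s w_\beta(s)=|\beta_0(s)\-\beta_1(s)|$ is large (comparable to $\sup\s w_\beta$). Consider the geodesic interval $I$ joining $\beta_0(s)$ and $\beta_1(s)$; by the Lemma in Section 2.3 (applied to either side), every point of $I$ is far — distance at least $W/2$ — from the other side, so $I$ is ``in the middle'' of the circle $\s{Im}\beta_0\cup\s{Im}\beta_1$. Now split this circle at the two points $\beta_0(s)$ and $\beta_1(s)$ into two arcs $\alpha_0$ (through $\beta_0$) and $\alpha_1$ (through $\beta_1$). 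Each $\alpha_\iota$ is a path joining the endpoints of $I$ that, by the thinness Lemma, stays outside the open ball $B_{<W/2}(y)$ where $y$ is the midpoint of $I$ — so each $\alpha_\iota$ lies in $\Delta_I$ (after checking $\ell_I=W$ and $r_I=W/2$; one may have to pass to $W$ a multiple of $2$, harmless up to $\pm1$). The lengths $\ell_{\alpha_0}$ and $\ell_{\alpha_1}$ sum to the length of the circle, which is at most $\ell(\beta_0)+\ell(\beta_1)=2L$; but more importantly each $\alpha_\iota$ is a concatenation of sub-arcs of the geodesics $\beta_0,\beta_1$ together with a sub-geodesic of $I$, and a careful accounting using the geodesic property of the sides shows $\min(\ell_{\alpha_0},\ell_{\alpha_1})\leqslant$ (small constant)$\cdot W$ — this is where the constant $10$ must be extracted.

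Concretely: from $\beta_0(s)$ the side $\beta_0$ runs back to the common initial point $\beta_0(0)=\beta_1(0)$ in distance exactly $s$, and from there $\beta_1$ runs out to $\beta_1(s)$ in distance exactly $s$; so the arc $\alpha_1'$ through the initial vertex has length $2s$. Similarly the arc through the terminal point has length $2(L-s)$. Choosing $s$ to be a width-maximizing parameter does not control $\min(s,L-s)$ directly, but one can instead choose the interval $I$ more cleverly — e.g. join $\beta_0(s)$ to $\beta_1(s')$ for suitable $s'$, or iterate: if the width is large at $s$ it is, by the $1$-Lipschitz property of $\s w_\beta$, large on a whole sub-interval, and within that sub-interval one selects a parameter balancing the two arc-lengths against the width. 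The upshot is an interval $I$ with $\ell_I=\ell(\s{one arc of the circle cut at the endpoints of }I)\leqslant c\cdot r_I$ for an explicit $c$, and one arranges $c\leqslant10$.

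The main obstacle is precisely this last extraction of the numerical constant: one must show that large width in a regular bigon yields a geodesic interval $I$ whose midpoint-ball of radius $r_I$ is bypassed by a curve of length at most $10\,r_I$. The natural bound coming from ``go around through the initial vertex'' is $2\cdot\text{(distance to initial vertex)}$, which is roughly $2L$, far too big when $L\gg W$; the fix is to localize — replace $\beta$ by a sub-bigon on which the width stays comparable to $W$ but whose length is now $O(W)$, using that $\s w_\beta$ is $1$-Lipschitz (triangle inequality) so it cannot drop from $W$ to, say, $W/3$ in fewer than $W/3$ steps, hence there is a sub-bigon of length at least $\sim W/3$; combined with the fact that along the bigon the width is at most its max, a sub-bigon of length $\Theta(W)$ realizing width $\Theta(W)$ exists, and cutting its circle gives arcs of length $\Theta(W)=\Theta(r_I)$ with a controlled constant. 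Pinning down that the constant is below $10$ — rather than merely some universal bound — is the delicate part, and likely requires choosing the sub-bigon and the interval $I$ so that $I$ itself is one of the ``short'' chords, making the two cut-arcs each of length at most about $5\,r_I$; then whichever arc is shorter certifies $\pi(I)\leqslant 10$. Once a sequence $\ell_{I_n}\to\infty$ with $\pi(I_n)\leqslant10$ is produced, it contradicts $\pi(\Gamma)>10$, so widths are bounded, Theorem P applies, and $\Gamma$ is hyperbolic. $\hfill\square$
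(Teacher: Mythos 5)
Your overall strategy is the paper's: negate hyperbolicity, invoke Theorem P to get geodesic bigons of unbounded width, and produce arbitrarily long intervals $I$ with $\pi(I)\leqslant10$ to contradict $\pi(\Gamma)>10$. But the heart of the proof --- actually exhibiting a curve of length at most $10\,r_I$ that joins the endpoints of $I$ and misses $B_{<r_I}(y)$ --- is not carried out, and the steps you do give have real problems. First, you take $I$ to be the transversal chord from $\beta_0(s)$ to $\beta_1(s)$ and claim, citing the Lemma of Section 2.3, that the two arcs of the circle $\s{Im}\beta_0\cup\s{Im}\beta_1$ avoid $B_{<W/2}(y)$, where $y$ is the midpoint of the chord. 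That Lemma only bounds the distance from the \emph{endpoints} $\beta_\iota(s)$ to the opposite side; it says nothing about the distance from $y$ to the sides, and indeed points $\beta_0(t)$ with $t$ slightly less than $s$ satisfy only $|\beta_0(t)\-y|\geqslant W/2-|s-t|$, so the arc may well enter the ball. Second, the two arcs have lengths $2s$ and $2(L-s)$, not $O(W)$, as you yourself note; your proposed repair (pass to a sub-bigon of length $\Theta(W)$ using the Lipschitz property of $\s w_\beta$ --- which, incidentally, is $2$-Lipschitz, not $1$-Lipschitz) does not produce a bigon, since a sub-arc of the circle is not a closed curve and there is nothing to cut. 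You explicitly leave the extraction of the constant $10$ open, so the proof is incomplete precisely at the point where the statement's content lies.

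For comparison, the paper's construction sidesteps both difficulties by changing what $I$ is. It works with the Hausdorff width $\s v_\beta=c$ (not the uniform width), chooses $p\in\s{Im}\beta_0$ realizing $\s{dist}(p,\s{Im}\beta_1)=c$, and takes $I=I_c$ to be the subsegment of $\beta_0$ of length $2c$ centered at $p$, so that $B_{I}=B_{<c}(p)$. Then all of $\s{Im}\beta_1$ automatically misses $B_I$ (since $p$ is at distance exactly $c$ from $\beta_1$), while every point of $\beta_0$ is within $c$ of $\beta_1$. The bypass is the broken line that runs along $\beta_0$ from each endpoint of $I$ out to the points $q_\pm$ at distance $2c$ from $p$ (length $c$ each, outside $B_I$), jumps to $\beta_1$ along geodesics of length at most $c$ (outside $B_I$ because they start at distance $2c$ from $p$), and closes up along a subsegment of $\beta_1$ of length at most $6c$ by the geodesic property of $\beta_1$ and the triangle inequality; the total is at most $10c=10\,r_I$. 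If you want to salvage your approach, you should replace your transversal chord by a segment of one side centered at a farthest point, which is exactly the device that makes both the ball-avoidance and the length count work.
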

\begin{proof}
Assume that $\Gamma$ is not hyperbolic. It suffices to find
 a sequence $\{I_k:k\in\Bbb N\}$
such that $\ell(I_k)\to\infty$ and $\pi(I_k)\leqslant10$.

By Theorem P (see subsection 2.4) the width of geodesic bigons
 in $\Gamma$ is unbounded.

Instead of the ``uniform'' width function $\s w_\beta$ consider the Hausdorff width function\newline
$\s v_\beta\leftrightharpoons\s{inf}\{c:$ each side of the bigon $\beta$ is contained in the $c$-neighbourhood
of the other side of $\beta\}$.

Let $\beta$ be a geodesic bigon $\beta$ such that $\s v_\beta=c$.
Without loss of generality we can assume that there exists a point $p$ on the side $\beta_0$
such that $\s{dist}(p,\s{Im}\beta_1)=c$. Let $p$ be such point (see Figure 2).
Let $I_c$ be $\{x\in\s{Im}\beta_0:|p\-x|\leqslant c\}$.
Since $\partial\beta\subset\s{Im}\beta_1$ we have $\s{dist}(p,\partial\beta\geqslant c)$.
Hence $\ell(I_c)=2c$. We will show that $\pi(I_c)\leqslant10$.\par\noindent
\begin{picture}(0,0)(20,80)
\put(40,0){
\pdfximage width 400pt{\mpPath 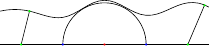}\pdfrefximage\pdflastximage}
\put(244,-2){\makebox(0,0)[ct]{$p$}}
\put(244,-16){\makebox(0,0)[cc]{Figure 2}}
\put(244,50){\makebox(0,0)[ct]{$B_{I_c}$}}
\put(254,88){\makebox(0,0)[cb]{$J$}}
\put(166,-2){\makebox(0,0)[ct]{$p_-$}}
\put(326,-2){\makebox(0,0)[ct]{$p_+$}}
\put(86,45){\makebox(0,0)[ct]{$I_-$}}
\put(86,-2){\makebox(0,0)[ct]{$q_-$}}
\put(406,-2){\makebox(0,0)[ct]{$q_+$}}
\put(412,45){\makebox(0,0)[ct]{$I_+$}}
\put(103,68){\makebox(0,0)[cb]{$r_-$}}
\put(443,78){\makebox(0,0)[cb]{$r_+$}}
\put(30,1){\makebox(0,0)[cl]{$\beta_0$}}
\put(32,65){\makebox(0,0)[cl]{$\beta_1$}}
\end{picture}
\vskip105pt
Let $p_-$ and $p_+$ denote the endpoints of $I_c$.
Suppose that there exist points $q_-,q_+\in\s{Im}\beta_0$
such that $p$ is between them and $|q_--p|=|q_+-p|=2c$ (see Figure 2).
Let $r_-$ be the point in $\s{Im}\beta_1$ such that $|q_--r_-|=\s{dist}(q_-,\s{Im}\beta_1)$.
Similarly, let $r_+$ be the point in $\s{Im}\beta_1$ such that $|q_+-r_+|=\s{dist}(q_+,\s{Im}\beta_1)$.
Let $I_-$ be a geodesic segment such that $\partial I_=\{q_-,r_-\}$
and let $I_+$ be a geodesic segment such that $\partial I_+=\{q_+,r_+\}$.

By the choice of the point $p$ we have $\ell(I_-)\leqslant c$ and $\ell(I_+)\leqslant c$.
Since $|q_--p|=2c$ we have $I_-\cap B_{I_c}=\varnothing$. Similarly, $I_+\cap B_{I_c}=\varnothing$.

Considering the broken line $r_--q_--p-q_+-r_+$ formed by $I_{\pm}$ and the part $[q_-,q_+]$ of $\beta_0$
we obtain: $|r_--r_+|\leqslant6c$.

Let $J$ be the part $[r_-,r_+]$ of $\beta_1$. Its length is at most $6c$.
By construction the broken line $p_--q_--r_-r_+-q_+-q$ whose part $[r_-,r_+]$ is $J$ does not
intersect $B_{I_c}$ and has length at most $10c$. Hence $\pi(I_c)\leqslant10$.

Suppose that $q_-$ does not exist and $q_+$ esxists. Then the distance from $p$ to $\beta_0(0)=\beta_1(0)$
is less than $2c$. In this case we define $q_-\leftrightharpoons r_-\leftrightharpoons\beta_0(0)
=\beta_1(0)$.
A similar observation shows that in this case $\pi(I_c)\leqslant 8$.
Analogously, if $q_-$ exists and $q_+$ does not exist then $\pi(I_c)\leqslant 8c$.
If neither $q_-$ nor $q_+$ exist then, by a similar argument, $\pi(I_c)\leqslant 6$.

Since $c$ is unbounded we have $\pi(M)\leqslant10$ contradicting the assumption.
\end{proof}
\subsection{Application to random walks}
Let $(G,\mu)$ be an admissible pair such. We keep the notation of the previous subsection.
For a geodesic word $\gamma\in\langle\Cal S\rangle$
denote by $I_\gamma$ the geodesic segment from $\boldsymbol\iota$ to $\boldsymbol\pi_\gamma$
in the Cayley graph $\Gamma\leftrightharpoons\s{Ca}(G,\Cal S)$.

We suppose that the group $G$ is transient that is not a virtually abelian of rank at most two.
\begin{prop} {\rm (Theorem 2)}
A transient group $G$ is hyperbolic if and only if there exists
 $\varepsilon<\varepsilon_0\leftrightharpoons\s{min}(\mu|_{\Cal S})$
such that, for every sufficiently long geodesc word $\gamma$,
$\Bbb P_{\boldsymbol\iota,\boldsymbol\pi(\gamma)}\{\psi\in\boldsymbol\pi^{-1}(\boldsymbol\pi(\gamma)):
\psi^{-1}B({I_\gamma})=\varnothing\}\leqslant\varepsilon^{5\ell(\gamma)}$.
\end{prop}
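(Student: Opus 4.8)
The statement is an equivalence, and the two directions are of rather different character. For the ``if'' direction — assuming the exponential-decay estimate on $\Bbb P_{\boldsymbol\iota,\boldsymbol\pi(\gamma)}$, prove $G$ hyperbolic — the plan is to feed the probabilistic hypothesis into the geometric criterion Theorem 1 (Proposition 3): it suffices to show $\pi(\Gamma)>10$. So suppose, for contradiction, that there are geodesic intervals $I_k=I_{\gamma_k}$ with $\ell(I_k)\to\infty$ and a rectifiable curve $\delta_k$ joining the endpoints of $I_k$, missing $B_{I_k}$, with $\ell(\delta_k)\leqslant 10\, r_{I_k}=5\,\ell(\gamma_k)$. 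Such a $\delta_k$ can be taken to be an edge path in $\Gamma$, hence corresponds to a word $\psi_k\in\boldsymbol\pi^{-1}(\boldsymbol\pi(\gamma_k))$ of length $\leqslant 5\ell(\gamma_k)$ with $\psi_k^{-1}B(I_{\gamma_k})=\varnothing$. The $\mu$-weight of this single path is at least $\varepsilon_0^{\,\ell(\psi_k)}\geqslant\varepsilon_0^{\,5\ell(\gamma_k)}$, and since $\s{Gr}_\mu(\boldsymbol\iota,\boldsymbol\pi(\gamma_k))\leqslant\s{Gr}_\mu(\boldsymbol\iota,\boldsymbol\iota)$ (the Green function is maximized at the identity) the probability of that one path alone is at least $\varepsilon_0^{\,5\ell(\gamma_k)}/\s{Gr}_\mu(\boldsymbol\iota)$. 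A short calculation shows this exceeds $\varepsilon^{\,5\ell(\gamma_k)}$ for all large $k$ whenever $\varepsilon<\varepsilon_0$ — indeed $(\varepsilon_0/\varepsilon)^{5\ell(\gamma_k)}\to\infty$ beats the constant $\s{Gr}_\mu(\boldsymbol\iota)$ — contradicting the hypothesis. Hence no such intervals exist, $\pi(\Gamma)>10$, and Theorem 1 gives hyperbolicity. (One must check the curve realizing the near-infimum in the definition of $\pi(I)$ can be taken as an integer-length edge path; this is routine since $\Gamma$ is a graph with unit edges, at the cost of an additive constant absorbed into the choice of ``sufficiently long''.)

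For the ``only if'' direction — $G$ hyperbolic implies the exponential estimate — the plan is to combine the geometric fact that in a hyperbolic graph $\pi(\Gamma)=\infty$ (quoted from \cite[7.1A]{Gr87}, with the explicit bound $\pi(I)\geqslant\delta(2^{r_I/\delta}-2)$) with a length-counting bound on the Green sum restricted to paths avoiding the ball. A path $\psi$ with $\psi^{-1}B(I_\gamma)=\varnothing$ is an edge path joining $\boldsymbol\iota$ to $\boldsymbol\pi(\gamma)$ that misses $B_{I_\gamma}$; by the $\pi(\Gamma)=\infty$ estimate its length $\ell_\psi$ is at least $\delta(2^{r/\delta}-2)$ where $r=\ell(\gamma)/2$ — in particular $\ell_\psi$ grows exponentially in $\ell(\gamma)$, so certainly $\ell_\psi\geqslant m(\gamma)$ for a threshold $m(\gamma)$ that is eventually much larger than any fixed linear function of $\ell(\gamma)$. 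Now invoke the tail estimate for path lengths: since $G$ is nonamenable — wait, hyperbolic groups need not be nonamenable (e.g.\ virtually cyclic ones), so instead use the cruder universal bound. The probability that a random path from $x$ to $z$ has length $\geqslant m$ is $\s{Gr}_\mu(x,z)^{-1}\sum_{i\geqslant m}\mu^{(i)}(x,z)$, and one always has $\mu^{(i)}(x,z)\leqslant\mu^{(i)}(\boldsymbol\iota,\boldsymbol\iota)$ together with $\s{Gr}_\mu(x,z)\geqslant\lambda^{|x-z|}\s{Gr}_\mu(\boldsymbol\iota)$ from the Harnack inequality $(6)$ in the proof of Proposition 2. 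Thus
\[
\Bbb P_{\boldsymbol\iota,\boldsymbol\pi(\gamma)}\{\psi:\psi^{-1}B(I_\gamma)=\varnothing\}
\leqslant
\Bbb P_{\boldsymbol\iota,\boldsymbol\pi(\gamma)}\{\psi:\ell_\psi\geqslant m(\gamma)\}
\leqslant
\frac{1}{\lambda^{\ell(\gamma)}\s{Gr}_\mu(\boldsymbol\iota)}\sum_{i\geqslant m(\gamma)}\mu^{(i)}(\boldsymbol\iota,\boldsymbol\iota).
\]
The remaining task is to show the right side is at most $\varepsilon^{5\ell(\gamma)}$ for some $\varepsilon<\varepsilon_0$ and all long $\gamma$. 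Since $m(\gamma)$ grows exponentially in $\ell(\gamma)$ while $\lambda^{-\ell(\gamma)}$ and $\varepsilon^{-5\ell(\gamma)}$ grow only exponentially in $\ell(\gamma)$, even the crudest summable bound on $\sum_{i\geqslant m}\mu^{(i)}(\boldsymbol\iota,\boldsymbol\iota)$ — namely that it is a tail of a convergent series, hence $o(1)$, or the quantitative $\sum_{i\geqslant m}\mu^{(i)}(\boldsymbol\iota,\boldsymbol\iota)\leqslant$ (const)$\cdot\rho_0^{\,m}$ for any $\rho_0\in(\boldsymbol\rho(G,\mu),1)$ via $\mu^{(i)}(\boldsymbol\iota,\boldsymbol\iota)\leqslant C\rho_0^{\,i}$ when $G$ is nonamenable, or a subexponential-in-$m$ bound in general — suffices once $m(\gamma)$ is taken large enough. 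One picks $\varepsilon\in(\lambda^{1/5},\varepsilon_0)$ (possible after first checking $\lambda^{1/5}<\varepsilon_0$, or simply enlarging the support bound; if this fails one instead notes $\psi^{-1}B=\varnothing$ forces $\ell_\psi$ so large that even $\varepsilon_0^{\,-5\ell(\gamma)}$ is swamped) and concludes.

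The main obstacle is the ``only if'' direction when $G$ is hyperbolic but \emph{amenable} (virtually cyclic): then $\boldsymbol\rho(G,\mu)=1$, the clean exponential tail $\mu^{(i)}(\boldsymbol\iota,\boldsymbol\iota)\leqslant\rho^i$ of Proposition 2 is unavailable, and one must argue more carefully — but here the geometry rescues us, because a virtually cyclic group has a \emph{bounded} Floyd/Gromov boundary picture: for $\gamma$ long enough, $B_{I_\gamma}$ actually disconnects $\boldsymbol\iota$ from $\boldsymbol\pi(\gamma)$ in $\Gamma$ (the open ball of radius $\ell(\gamma)/2$ about the midpoint contains a full ``cross-section''), so $\Delta_{I_\gamma}=\varnothing$, the event $\{\psi^{-1}B(I_\gamma)=\varnothing\}$ is \emph{empty}, and its probability is $0\leqslant\varepsilon^{5\ell(\gamma)}$ trivially. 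Thus the proof splits: nonamenable hyperbolic $G$ handled by the tail estimate above, amenable (virtually cyclic) hyperbolic $G$ handled by the disconnection observation; and the ``if'' direction handled uniformly by Theorem 1 as in the first paragraph. Verifying the disconnection claim in the virtually-cyclic case, and pinning down exactly how large ``sufficiently long'' must be so that $m(\gamma)>D\ell(\gamma)+N+5\ell(\gamma)\log_\rho\varepsilon^{-1}$ in Proposition 2's notation, are the two places where care is needed; everything else is bookkeeping with the Harnack inequality and the geometric estimate $\pi(I)\geqslant\delta(2^{r_I/\delta}-2)$.
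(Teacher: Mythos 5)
Your ``if'' direction is essentially the paper's argument: contradiction via Theorem 1 (Proposition 3), a single bypassing path of length close to $10r_k$ whose individual probability $\geqslant\varepsilon_0^{\ell\psi_k}/\mathsf{Gr}_\mu(\boldsymbol\iota)$ eventually beats $\varepsilon^{10r_k}$ because $(\varepsilon_0/\varepsilon)^{r_k}\to\infty$; the paper is slightly more careful than you are about the fact that $\pi(I_k)\leqslant10$ only yields curves of length $\leqslant(10+\delta_k)r_k$ with $\delta_k\to0$, not $\leqslant10r_k$ exactly, but your argument absorbs this the same way. Your ``only if'' direction, however, takes a genuinely different route. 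The paper does not use the divergence estimate $\pi(I)\geqslant\delta(2^{r_I/\delta}-2)$ at all here; instead it quotes the quantitative form of the main estimate of [GGPY21] (their formula (24)) to show that the probability of bypassing a ball of radius $r$ around a point of $[x,z]$ decays superexponentially in $r$, which then dominates $\varepsilon^{10r}$. You instead combine Gromov's exponential divergence (any path missing $B_{I_\gamma}$ has length at least exponential in $\ell(\gamma)$) with the nonamenability tail bound of Proposition 2, so that $\rho^{m(\gamma)-D\ell(\gamma)-N}$ with $m(\gamma)$ exponential in $\ell(\gamma)$ crushes $\varepsilon^{5\ell(\gamma)}$. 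This is more self-contained (it avoids importing the internals of [GGPY21]) but it does require $G$ nonamenable --- and here you tie yourself in an unnecessary knot. You correctly notice that hyperbolic groups can be amenable and then build a separate disconnection argument for the virtually cyclic case, but the proposition's standing hypothesis that $G$ is \emph{transient} already excludes virtually abelian groups of rank $\leqslant2$ (Varopoulos, Subsection 2.8), hence all elementary hyperbolic groups; a transient hyperbolic group is nonelementary, contains a free subgroup, and is therefore nonamenable, so Proposition 2 applies directly and your case split evaporates. With that observation your proof closes cleanly; the only remaining housekeeping (converting a rectifiable curve missing the open ball into a word in $\langle\mathcal S\rangle$ whose visited vertices miss a ball of radius smaller by $\Frac12$, and the maximality $\mathsf{Gr}_\mu(g)\leqslant\mathsf{Gr}_\mu(\boldsymbol\iota)$) is of the same ``routine'' calibre that the paper itself leaves implicit.
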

\begin{proof}
We first prove the sufficiency.

Suppose by contradiction that the condition holds, but $G$ is not hyperbolic.
Then, by Proposition 3, $\pi(\Gamma)\leqslant10$.
Thus there exists a sequence $\gamma_k$ ($k\in\Bbb N$) of geodesic words such that $\ell(\gamma_k)\to\infty$
and\newline
$(1)$\quad $\sigma\leftrightharpoons\underset{k\rightarrow\infty}{\s{lim}}\pi(I_{\gamma_k})\leqslant10$.\newline
For $k\in\Bbb N$, denote $g_k\leftrightharpoons\boldsymbol\pi(\gamma_k)$, $r_k\leftrightharpoons|g_k|/2$.
Let $\psi_k\in\boldsymbol\pi^{-1}g_k$ be such that
$\psi_k^{-1}B(I_{\gamma_k})=\varnothing$.

The $\mu$-weight $\mu_{\psi_k}$ of $\psi_k$ is at least $\varepsilon_0^{\ell(\psi_k)}$.
Hence\hfil\penalty-10000
$(2)$\quad$\Bbb P_{\boldsymbol\iota,g_k}\{\psi_k\}\geqslant\Frac{\varepsilon_0^{\ell\psi_k}}M$
where $M\leftrightharpoons\s{Gr}_\mu(\boldsymbol\iota)=\s{max\{Gr}_\mu(g):g\in G\}$.\newline
By the assumption, for sufficiently big $k$, one has
$\Bbb P_{\boldsymbol\iota,g_k}\{\psi\in\boldsymbol\pi^{-1}g_k:\psi^{-1}B(I_{\gamma_k})=\varnothing\}\leqslant\varepsilon^{10r_k}$.\newline
By $(2)$, for these values of $k$, one has\newline
$(3)$\quad $\Frac{\varepsilon_0^{\ell\psi_k}}M\leqslant\varepsilon^{10r_k}$.

Denote $\delta_k\leftrightharpoons\Frac{\ell\psi_k}{r_k}-10$.
By $(1)$, if $\sigma<10$ then $\delta_k<0$ for $k\gg0$ and if $\sigma=10$ then
$\delta_k\overset{k\to\infty}\longrightarrow0$. Thus\newline
$(4)$ $\forall\delta>0\exists k_\delta\in\Bbb N\,\forall k\geqslant k_\delta:\delta_k\leqslant\delta$.

Substituting to $(3)$ one has:
$\Frac{\varepsilon_0^{(10+\delta_k)r_k}}M\leqslant\varepsilon^{10r_k}$, and, consequtely:\newline
$\Frac{\varepsilon_0^{(10+\delta_k)r_k}}{\varepsilon^{10r_k}}\leqslant M$,
$\left(\Frac{\varepsilon_0^{10+\delta_k}}{\varepsilon^{10}}\right)^{r_k}\leqslant M$,
$\left((\varepsilon_0/\varepsilon)^{10}{\cdot}\varepsilon_0^{\delta_k}\right)^{r_k}\leqslant M$.

Set $\delta\leftrightharpoons(\s{log}_{\varepsilon_0}\varepsilon)-1>0$.
By $(4)$, for sufficiently big $k$, one has $\delta_k\leqslant\delta$, and hence
$\varepsilon_0^{1+\delta_k}\geqslant\varepsilon$.
So, $\varepsilon_0^{\delta_k}{\geqslant}\varepsilon/\varepsilon_0$ and
$(\varepsilon_0/\varepsilon)^{10}{\cdot}\varepsilon_0^{\delta_k}\geqslant(\varepsilon_0/\varepsilon)^9$.
Therefore $(\varepsilon_0/\varepsilon)^{9r_k}\leqslant M$.
The left-hand side of this inequality tends to infinity while the right-hand side is constant.
This contradiction implies that $\pi(\Gamma)>10$ and the group $G$ is hyperbolic by Proposition 5.

We now prove the necessity.

By Proposition 1,
for every nonelementary hyperbolic group $G$ and for every $\varepsilon\in(0,1)$
 there exists $r\geqslant0$ such that property $\s{TA}_{r,\varepsilon}$ holds.
We will show that the upper bound $\varepsilon(r)$ for the probability that a random path bypasses a ball of radius $r$
can be chosen to have superexponential decay:\hfil\penalty-10000
 $\underset{r\rightarrow\infty}{\s{lim}}\Frac{\s{ln}(\varepsilon(r))}r=-\infty$.

This estimate come from the proof of the  main result of \cite{GGPY21}.
 Indeed by formula $(24)$ of \cite[page 782]{GGPY21} $\varepsilon(r)$
can be taken to be equal to $\displaystyle\sum_{i=0}^\infty\phi^{h(\theta^ir)-2\theta^iDr}$
where $\phi<1$, $\theta>1$ and $h(r)$ is a function satisfying $h(r)/r\to\infty$ as $r\to\infty$.
So, for every $C>0$ and sufficiently big $r$ one has $h(\theta^ir)-2\theta^iDr>Cr(i+1)$ for all $i$.
Therefore $\varepsilon(r)<\phi^{Cr}\sum_{i\geqslant0}\phi^{iCr}=\Frac{\phi^{Cr}}{1-\phi^{Cr}}<2\phi^{Cr}$ ($r\gg0$).
We have $2\phi^{Cr}<\varepsilon^{10r}$ for $C>10{\cdot}\s{log}_\phi({\varepsilon\over2})$.
Theorem 2 is proved.\end{proof}

\section{A preliminary result on width functions}
\subsection{Proper functions and suitable intervals}
For a metric space $M$ and a bigon $\beta{\in}\s{gb}M$ the width function $f\leftrightharpoons\s w_\beta$
has the following properties:\newline
1: $f$ is defined on a closed interval $\s{dom}(f)\leftrightharpoons I$ of finite length;\newline
2: $f\geqslant0$;\newline
3: $f|_{\partial I}=0$;\newline
4: $f$ is 2-Lipschitz: $|f(x){-}f(y)|\leqslant2{\cdot}|x-y|$ (which follows from the triangle inequality).

Let us call a real function \it proper \rm if it satisfies the conditions 1--4.
Thus every width function of a geodesic bigon is proper.

\textbf{Definition.} Let $k$ be a positive number.
For a proper function $f$ with $\s{dom}(f)=I$,
a closed interval $J\subset I$ is called $k$\it-suitable for \rm $f$
it the following condition holds:\hfil\penalty-10000
$(1)$\quad$\Frac{\ell_J}{2k}\leqslant\s{min}(f|_J)=f|_{\partial J}\leqslant\Frac{\ell_J}k$.

The $k$\it-height of \rm$f$ is the number $\s h_k(f)\leftrightharpoons\s{sup\{min}(f|_J):J$ is $k$-suitable for $f\}$.

\begin{prop}
For every $k>0$ and every proper function $f$ one has $f\leqslant(2k+1)\s h_k(f)$.
\end{prop}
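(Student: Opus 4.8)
The statement is a quantitative bound relating the sup-norm of a proper function $f$ to its $k$-height, so the obvious strategy is to take a point $x_0\in\s{dom}(f)=I$ where $f$ is close to its supremum and manufacture around it a $k$-suitable interval $J$ with $\s{min}(f|_J)$ comparable to $f(x_0)$; then $f(x_0)\leqslant(2k+1)\s{min}(f|_J)\leqslant(2k+1)\s h_k(f)$, and letting $f(x_0)\to\sup f$ finishes. So the real content is: \emph{starting from a near-maximal value $M_0:=f(x_0)$, find a subinterval $J\ni x_0$ whose endpoint-minimum $m:=f|_{\partial J}=\s{min}(f|_J)$ satisfies both $M_0\leqslant(2k+1)m$ and the two-sided suitability inequality $\ell_J/(2k)\leqslant m\leqslant \ell_J/k$.}

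The mechanism is a continuity/intermediate-value argument exploiting the $2$-Lipschitz condition (property 4) together with $f|_{\partial I}=0$ (property 3). Consider intervals $J_t:=[x_0-t,x_0+t]\cap I$ and watch $g(t):=\s{min}(f|_{J_t})$ as $t$ grows from $0$. At $t=0$, $g(0)=M_0$; for $t$ large, $J_t=I$ and $g(t)=0$; and $g$ is continuous and nonincreasing. Meanwhile $\ell_{J_t}$ grows (roughly $2t$, with boundary truncation). We want to stop at the first $t$ where $g(t)$ drops to level $\ell_{J_t}/k$ (say), i.e. where the ratio $g(t)\,/\,\ell_{J_t}$ crosses from above $1/k$ to below; by continuity there is a $t^\ast$ with $g(t^\ast)=\ell_{J_t^\ast}/k$, giving the upper bound in $(1)$ with equality. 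For such $t^\ast$ the lower bound $\ell_J/(2k)\leqslant m$ is then automatic since $m=\ell_J/k\geqslant\ell_J/(2k)$, provided the minimum is genuinely attained at the endpoints — one may need to symmetrize or re-center $J$ slightly so that $\s{min}(f|_J)=f|_{\partial J}$ rather than being attained in the interior. The bound $M_0\leqslant(2k+1)m$ then comes from the $2$-Lipschitz estimate: $x_0$ lies within distance $\ell_J/2$ of an endpoint $x_\partial\in\partial J$, so $M_0=f(x_0)\leqslant f(x_\partial)+2\cdot\tfrac{\ell_J}{2}=m+\ell_J=m+k\cdot\tfrac{\ell_J}{k}=m+km=(k+1)m$, which is even better than $(2k+1)m$; the slack of the weaker constant $2k+1$ is exactly what absorbs the boundary-truncation case where $J_t$ hits $\partial I$ on one side and is no longer symmetric about $x_0$ (then $x_0$ may be as far as $\ell_J$ from the far endpoint, giving $M_0\leqslant m+2\ell_J\leqslant m+2km=(2k+1)m$).

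In detail the steps are: (i) fix $x_0$ with $f(x_0)>\sup f-\eta$; (ii) define the family $J_t$ and the function $t\mapsto g(t)/\ell_{J_t}$, check it is continuous, equals $+\infty$ (or is large) near $t=0$ and is $0$ once $J_t=I$, hence hits the value $1/k$ at some first $t^\ast$; (iii) set $J:=J_{t^\ast}$, so $\s{min}(f|_J)=\ell_J/k$ and $\ell_J/(2k)\leqslant\s{min}(f|_J)$, and argue that after possibly shrinking $J$ to the convex hull of the arg-min set one may assume $\s{min}(f|_J)=f|_{\partial J}$ while only decreasing $\ell_J$ and preserving $(1)$ (this is the fiddly bookkeeping point); (iv) conclude $J$ is $k$-suitable, so $\s h_k(f)\geqslant\s{min}(f|_J)=m$; (v) apply the Lipschitz estimate above to get $f(x_0)\leqslant(2k+1)m\leqslant(2k+1)\s h_k(f)$; (vi) let $\eta\to0$.

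The main obstacle is step (iii): ensuring the interval produced by the intermediate-value argument actually realizes its minimum \emph{on its boundary}, as the definition of $k$-suitable demands ($f|_{\partial J}=\s{min}(f|_J)$), rather than somewhere in the interior. One must argue that one can always replace $J$ by a suitable subinterval on which this holds — e.g. by cutting $J$ at an interior minimum point and keeping the half that still straddles $x_0$, and checking the suitability inequality $(1)$ survives this surgery (the length only shrinks, and the Lipschitz bound controls how much the endpoint value can exceed the interior minimum). Handling the asymmetric boundary-truncation case cleanly is the secondary annoyance, but as noted the weaker constant $2k+1$ (versus the sharp $k+1$) is precisely the room left for it, so no cleverness beyond careful casework is needed there.
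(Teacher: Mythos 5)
Your step (iii) is where the proof actually lives, and as described it does not go through. The definition of a $k$-suitable interval imposes two constraints that pull against each other: the minimum must be attained at \emph{both} endpoints ($f|_{\partial J}=\s{min}(f|_J)$), and the length must satisfy $km\leqslant\ell_J\leqslant2km$ where $m$ is that minimum. Your stopping rule produces $J=[x_0-t^\ast,x_0+t^\ast]$ with $m=\ell_J/k$, i.e.\ the length sits exactly at the lower admissible bound $km$; consequently \emph{any} shrinking of $J$ (in particular passing to the convex hull of the arg-min set, or cutting at an interior minimum) destroys the inequality $m\leqslant\ell_J/k$, while enlarging $J$ until both endpoints reach level $m$ can overshoot the upper bound $2km$. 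A concrete instance: on $I=[0,40]$ with $k=1$, let $f$ rise with slope $2$ from $f(0)=0$ to $f(8)=16$ and then decrease with slope $-1/2$ to $f(40)=0$. Your construction stops at $J=[4,12]$ with $m=8$, but $f(12)=14$, so the right endpoint is not at the minimum; the arg-min set is the single point $\{4\}$; and extending rightward until $f$ returns to $8$ gives $[4,24]$, of length $20>2km=16$. In fact every $1$-suitable interval for this $f$ must straddle $x=8$ (on each side $f$ is strictly monotone) and a short computation shows its endpoint value lies in $[80/9,80/7]$; since $8<80/9$, the level $m=8$ produced by your construction is not the endpoint-minimum of \emph{any} $1$-suitable interval, so no surgery preserving that level can succeed.

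The paper resolves exactly this tension globally rather than locally: it maximizes $g_R=\s{min}(f|_R)$ over the compact family $\Cal F$ of intervals whose minimum is attained at both endpoints and whose length is at least $k$ times that minimum (only the lower length bound is imposed), calls the maximum $M$, and then analyzes the connected components of $I\setminus f^{-1}M$. The key point (Lemma 3 there) is that no component on which $f>M$ can have length exceeding $2kM$ --- otherwise one manufactures inside it a member of $\Cal F$ with strictly larger minimum, contradicting maximality --- after which a genuinely $k$-suitable interval at level $M$ is extracted (Lemma 4), and the bound $\s{max}f\leqslant M+2kM$ follows from the Lipschitz condition applied within the component containing the maximum. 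Your Lipschitz endgame (step (v)) matches the paper's, but the existence of a suitable interval at a level comparable to $\s{max}f$ is the substance of the proposition, not bookkeeping, and the intermediate-value construction centered at $x_0$ does not supply it.
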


Remark. Applying to the width of functions $\s w_\beta$ of bigons this proposition gives: if
the width of bigons is unbounded then, for every $k>0$, the function $\beta\mapsto\s h_k(\s w_\beta)$
is undbounded too.

\begin{proof}
Let $I\leftrightharpoons\s{dom}(f)$.

On the set $\Cal C\leftrightharpoons\{$the closed nonempty intervals$\subset I\}$,
we consider the distance
 $$\varrho(R,S)\leftrightharpoons\s{max\{|min}R-\s{min}S|,|\s{max}R-\s{max}S|\}.$$
Thus $(\Cal C,\varrho)$ is a compact metric space (homeomorphic to a closed triangle in the plane).
The maps $\Cal C\ni R\mapsto\s{min}R$ and $\Cal C\ni R\mapsto\s{max}R$ are 1-Lipschitz and hence continuous.
Hence the function $R\mapsto\ell_R$ is continuous.
\vskip3pt
{\sc Lemma} 1. \textit{If $R,S\in\Cal C$ and $p\in R$ then there exists $q\in S$ such that}
$|p-q|\leqslant\varrho(R,S)$.
\vskip3pt
\noindent\textit{Proof.} If $p\in S$ then we can take $q\leftrightharpoons p$.
If $p\notin S$ then $p$ is from the left or from the right side of $S$.
 In the first case, for $q\leftrightharpoons\s{min}S$
one has $p-q\leqslant\s{min}R-\s{min}S\leqslant\varrho(R,S)$. Similarly, in the second case
one can take $q\leftrightharpoons\s{max}S$.\hfill$\square$
\vskip3pt
{\sc Lemma} 2. \textit{The function $g:\Cal C\ni R\mapsto\s{min}(f|_R)$ is $2$-Lipschitz}.
\vskip3pt\noindent
\textit{Proof.} Let $R,S\in\Cal C$. Denote $e\leftrightharpoons\varrho(R,S)$.
Let $p\in R$ be such that $f(p)=g(R)$. By Lemma 1 there exists $q\in S$ such that $|p-q|\leqslant e$.
Since $f$ is 2-Lipschitz, $|f(p)-f(q)|\leqslant2e$. Hence
$g_S=\s{min}(f|_S)\leqslant f(q)\leqslant f(p)+2e=g_R+2e$.
By symmetry, $g_R\leqslant g_S+2e$. It follows that
$|g_R-g_S|\leqslant 2e=2{\cdot}\varrho(R,S)$.\hfil$\square$
\vskip3pt
Since $k>0$, it follows from Lemma 2 that the set $\Cal F\leftrightharpoons\{R\in\Cal C:g_R=f|_{\partial R},
\ell_R\geqslant k{\cdot}g_R\}$ is closed in $\Cal C$ and so is compact.
It is nonempty since $I\in\Cal F$.

Let $M\leftrightharpoons\s{max}(g|_{\Cal F})$.
If $M{=}0$ then $f=0$ and the proposition holds.
Suppose that $M>0$.

Divide the set $\Cal D\leftrightharpoons\{$the connected components of $I\setminus f^{-1}M\}$ as follows:\newline
$\Cal D_-\leftrightharpoons\{P\in\Cal D:f|_P<M\}$;\hfil\penalty-10000
$\Cal D_0\leftrightharpoons\{P\in\Cal D:f|_P>M,\ell_P<kM\}$;\hfil\penalty-10000
$\Cal D_1\leftrightharpoons\{P{\in}\Cal D:f|_P>M,kM\leqslant\ell_P\leqslant2kM\}$;\hfil\penalty-10000
$\Cal D_2\leftrightharpoons\{P\in\Cal D:f|_P>M,\ell_P>2kM\}$.
\vskip3pt
{\sc Lemma} 3. $\Cal D_2=\varnothing$.
\vskip3pt\noindent
\textit{Proof.}
Let $Q\in\Cal D_2$. Since the functions $\Cal C\ni R\mapsto\ell_R$ and $g$
are continuous and $\ell_{\overline Q}>2kM$, $g_{\overline Q}=M$,
there exists $R{\in}\Cal C$ such that $R\subset Q$, $\ell_R>2kM$ and $M_1\leftrightharpoons g_R<2M$
 (as $M\ne0$).
Since $f|_Q>M$ we have $M_1>M$.

Let $Q=]a,b[$ and $R=[a_1,b_1]$. We have $f(a)=M$, $f(a_1)\geqslant M_1$.
By continuity of $f$ the set $[a,a_1]\cap f^{-1}M_1$ is closed and nonempty.
Let $a_2\leftrightharpoons\s{max}([a,a_1]\cap f^{-1}M_1)$.
We have $f^{-1}M_1\cap]a_2,a_1[=\varnothing$ and $f(a_2)=M_1$.

In the same way we find a point $b_2\in[b_1,b]$ such that $f(b_2)=M_1$ and $f|_{[b_1,b_2]}\geqslant M_1$.
Let $S\leftrightharpoons[a_2,b_2]$. We have $g_S=M_1$ and
$\ell_S\geqslant\ell_R>2kM\geqslant kM_1=k{\cdot}g_S$.
So $S\in\Cal F$ and we have a contradiction with the definition of $M$.\hfill$\square$
\vskip3pt
{\sc Lemma} 4. $M\leqslant\s h_k(f)$.\newline
\textit{Proof.}
It suffices to find a $k$-suitable interval $S$ with $g_S=M$.
For every $Q\in\Cal D_1$ the interval $\overline Q$ is $k$-suitable for $f$.
Hence we can assume that $\Cal D_1=\varnothing$.

Let $K\in\Cal F$ and $g_K=M$. By Lemma 3 and our assumption, all $Q\in\Cal D$ such that
$Q\subset K$ belong to $\Cal D_0$. Let $a\leftrightharpoons\s{min}K$, $b\leftrightharpoons a+kM$
and $R\leftrightharpoons[a,b]$. Since $K\in\Cal F$ we have $b\in K$.
If $f(b)=M$ then $R$ is $k$-suitable for $f$.

If $f(b)>M$ then let $Q\in\Cal D_0$ be such that $b\in Q$. The set $S\leftrightharpoons R\cup\overline Q$
is a closed interval with $g_S=M$, $f|_{\partial S}=M$ and $\ell_S\leqslant k+\ell_Q<2k$.
Hence $S$ is $k$-suitable for $f$.\hfill$\square$
\vskip3pt
Remark. Every $k$-suitable interval belongs to $\mathcal F$ thus $M\geqslant\s h_k(f)$.
So by Lemma we have $M=\s h_k(f)$.
\vskip3pt
It remains to estimate $f$. Let $a\in I$ and $f(a)=\s{max}f$.
We can assume that $f(a)>M$. Let $Q\in\Cal D$ be such that $a\in Q$. By Lemma 3, $Q\in\Cal D_0\cup\Cal D_1$
hence $\ell Q{\leqslant}2kM$. Let $p$ be the endpoint of $Q$ for which $|a-p|\leqslant\ell_Q/2\leqslant kM$
and $f(p)=M$.
By 2-Lipschitz condition $f(a)-f(p)\leqslant2|a-p|\leqslant2kM$. Thus
$f(a)\leqslant M+2kM=(2k+1)M$. It follows from Lemma 4 that $f\leqslant(2k+1)\s h_k(f)$ as claimed.\end{proof}
\section{Proof of Theorem 3}
The goal of this section is to prove Theorem 3.
In fact we prove a stonger Proposition 6, where the nonamenability assumption is replaced
by weaker conditions $\s A$ and $\s B$.
These conditions contain certain parameters that are included in
the estimate of the upper bound $\varepsilon_0$ for $\varepsilon$.
In particular the conditions are satisfied when the group $G$ is non-amenable.
\subsection{Conditions on $(G,\mu)$}
The first condition $\s A$ contains positive numbers $A$ and $a<1$ as parameters.
 It seems that many groups satisfy
$\s A$ for some $A,a$.
\vskip3pt\noindent
$\s{A=A}_{A,a}:\forall c\geqslant1:|g|\leqslant c\Rightarrow\mathbb P_{\boldsymbol\iota,g}\{\psi:\ell_\psi\geqslant Ac\}\leqslant a$.
\vskip3pt
\sc Lemma 1\sl. Every non-amenable admissible pair $(G,\mu)$ satisfies $\s A_{A,a}$ for some $A$ and \rm$a<1$.
\vskip3pt\noindent
\textit{Proof.} Let $\rho,D,N$ be the numbers from the proof of Proposition 2.
Let $A=D+N+1$. By $\s{NA}$, for every $g\in G$ one has
$\Bbb P_{\boldsymbol\iota,g}\{\psi:\ell_\psi\geqslant Ac|\}\leqslant$\newline$
\leqslant\rho^{Ac-D|g|{-}N}=
\rho^{(A-D)c+D(c-|g|)-N}=
\rho^{(N+1)c+D(c-|g|)-N}\leqslant\rho^{1+D(c-|g|)}\leqslant\rho<1$.\hfill$\square$
\vskip3pt

The second condition is more restrictive. It includes the hypothese that $\s A_{A,a}$ holds for
some particular $(A,a)$ and depends on two more positive parameters $B$ and $b$:
\vskip3pt\noindent
$\s{B=B}_{A,a,B,b}:\s A_{A,a}$ holds, $b<1-a$, and $\forall c\geqslant1,|g|\leqslant c\Rightarrow
\Bbb P_{\boldsymbol\iota,g}\{\psi:\ell_\psi\geqslant Bc\}\leqslant\Frac b{2(AB+B+2)}$.
\vskip3pt\noindent
\sc Lemma 2\sl. For every non-amenable admissible pair $(G,\mu)$ and every $\delta>0$ there
exist $B_\delta$ such that, for every $c\geqslant1$:
$|g|\leqslant c\Rightarrow\Bbb P_{\boldsymbol\iota,g}\{\psi:\ell_\psi\geqslant Bc\}<\Frac\delta B$ for every
$B\geqslant B_\delta$\rm.
\vskip3pt\noindent
\begin{proof} Let $\rho,D,N$ be as above. The function $B\mapsto B\rho^{B-D-N}$ tends to $0$ as $B\to\infty$.
Hence, for some $B_\delta>D$ and for every $B\geqslant B_\delta$
 one has $B{\cdot}\rho^{B-D-N}<\delta$. It follows from the inequality $\s{NA}$ that
$\Bbb P_{\boldsymbol\iota,g}\{\psi:\ell_\psi\geqslant Bc\}\leqslant
\rho^{Bc-D|g|-N}=\rho^{(B-D)c+D(c-|g|)-N}\leqslant\rho^{B-D-N}<\delta/B$.
\end{proof}
\vskip3pt
\sc Corollary\sl. For every non-amenable admissible pair $(G,\mu)$ there exist
$A,a<1,B,b<1-a$ such that $\s B_{A,a,B,b}$ holds\rm.
\vskip3pt\noindent
\textit{Proof.} By Lemma 1, there exist $A,a<1$ such that $\s A_{A,a}$ holds for $(G,\mu)$.
Let $0<b<1-a$ and $\delta\leftrightharpoons\Frac b{2(A+2)}$ and $B_\delta$ be from Lemma 2.
 If $B\geqslant B_\delta$ and $B>2$
then
for every $c\geqslant1$ such that $|g|\leqslant c$ one has
 $\Bbb P_{\boldsymbol\iota,g}\{\psi:\ell_\psi\geqslant Bc\}<\Frac b{2(AB+2B)}<
\Frac b{2(AB+B+2)}$.\hfill$\square$
\subsection{Main proposition} The remaining part of the section is devoted to the proof of
the following
\begin{prop}
If an admissible pair $(G,\mu)$ satisfies $\s B_{A,a,B,b}$ for some $A,a<1$, $b<1-a$ and
$\s{TA}_{\varepsilon,r}$ for some $r\geqslant 0$
and some\newline
$(1)$\quad $\varepsilon<\varepsilon_0\leftrightharpoons\Frac{1-a-b}{2(AB+B+3)}$\newline
 then $G$ is hyperbolic.
\end{prop}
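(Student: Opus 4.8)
The strategy is to combine the geometric criterion of Proposition~3 (Theorem~1) with the probabilistic hypotheses. By Proposition~3 it suffices to prove $\pi(\Gamma)>10$ for the Cayley graph $\Gamma=\s{Ca}(G,\Cal S)$; equivalently, assuming $\Gamma$ is not hyperbolic, we must derive a contradiction. Since $\Gamma$ is not hyperbolic, Theorem~P gives geodesic bigons of unbounded width, and after normalizing (Subsection~2.4) we may take a sequence of regular normalized bigons $\beta^{(k)}$ with $\s w_{\beta^{(k)}}$ unbounded. The point of the argument in Proposition~3 was that an interval $I_c$ of length $2c$ on one side of a thin-at-scale-$c$ bigon admits a bypass curve of length $\le 10c$; here we instead want to exploit a \emph{wide} bigon to produce a subinterval $[x,z]$ of a word geodesic whose midpoint $y$ is \emph{deep inside} the bigon, so that any path from $x$ to $z$ staying outside $B_{<r_I}(y)$ must be very long — but $\s{TA}_{\varepsilon,r}$ forces such long paths to carry most of the probability, and the nonamenability-type bounds $\s A$, $\s B$ forbid that. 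The role of Proposition~5 (the width function estimate) is to convert ``width unbounded'' into ``there is a $k$-suitable interval of large minimal width'': for the width function $f=\s w_{\beta^{(k)}}$ of a sufficiently wide bigon, $\s h_k(f)$ is large, so there is a closed subinterval $J\subset\s{dom}(f)$ with $\ell_J/(2k)\le \min(f|_J)=f|_{\partial J}\le \ell_J/k$ and $\min(f|_J)$ as large as we please.

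**Setting up the bypass.** Fix such a $k$-suitable interval $J=[s_0,s_1]$ for $f=\s w_{\beta^{(k)}}$, with $m\leftrightharpoons \min(f|_J)=f(s_0)=f(s_1)$ large and $km\le \ell_J\le 2km$. Let $y\leftrightharpoons\beta_0(s_\ast)$ where $s_\ast$ is an interior point of $J$ with $f(s_\ast)=m$ maximal; since $J$ is $k$-suitable and $f$ is $2$-Lipschitz, every point of $\s{Im}\beta_1$ over $J$ is at distance $\ge m/2$ from $y$ (by the Lemma of Subsection~2.3), and more importantly every point of $\s{Im}\beta_0$ outside the sub-arc over $J$ is far from $y$ because to leave $J$ along $\beta_0$ one travels $\ge km/2$ along a geodesic. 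Choose on $\beta_0$, symmetrically around $\beta_0(s_\ast)$, two vertices $x,z$ at geodesic distance $r_I$ from $y$ with $r_I$ comparable to $\min(m,\ell_J)$ — say $r_I=\lceil m/2\rceil$ or a fixed fraction thereof — so that $I\leftrightharpoons[x,z]\subset \s{Im}\beta_0$ is a genuine word geodesic with midpoint $y$, and $B_{<r_I}(y)$ is the corresponding ball. The geometry then says: any path from $x$ to $z$ \emph{missing} $B_{<r_I}(y)$ cannot stay on $\beta_0$ near $y$ (that part is inside the ball) and cannot cross to $\beta_1$ cheaply (the other side is $\ge m/2$ away over $J$); a pigeonhole along the lines of the broken-line estimate in Proposition~3 forces such a path to have length at least some fixed constant times $r_I$ — concretely at least, say, $2r_I$ plus a positive linear surplus, but for the numerics we only need length $\ge (A B +B+\text{const})\cdot r_I$ is \emph{not} forced; rather we need the complementary statement that \emph{most} of the $\Bbb P_{x,z}$-mass sits on paths of length at most $B\cdot r_I$, which is exactly what $\s B_{A,a,B,b}$ provides. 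Meanwhile $\s{TA}_{\varepsilon,r}$ (after passing to normalized balls, Subsection~2.4, costing $r\mapsto r+1$) says the $\Bbb P_{x,z}$-mass of paths meeting $B_{\le r}(y')$ is $\ge 1-\varepsilon$ for the relevant nearby center $y'$; combining these two mass estimates with the constant $2(AB+B+3)$ appearing in $(1)$ forces $1-a-b\le 2(AB+B+3)\varepsilon$, i.e. $\varepsilon\ge\varepsilon_0$, contradicting the hypothesis $\varepsilon<\varepsilon_0$.

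**The bookkeeping and the main obstacle.** The heart of the argument is the balance of three quantities along a path $\psi\in\s{Pth}(x,z)$ that misses the ball: its length $\ell_\psi$, how far it is ``allowed'' to wander (controlled by $\s A$: with probability $\ge 1-a$ one has $\ell_\psi\le A|x^{-1}z|=2Ar_I$), and the event that it nonetheless avoids $B_{\le r}(y')$. One splits $\Bbb P_{x,z}$ according to (i) $\psi$ meets $B_{\le r}(y')$ — mass $\ge 1-\varepsilon$ by $\s{TA}$; (ii) $\psi$ has $\ell_\psi> Bc$ for the appropriate $c\asymp r_I$ — mass $\le b/(2(AB+B+2))$ by $\s B$; (iii) $\psi$ has $\ell_\psi\le A c$ — mass controlled via $\s A$. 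One then uses the Markov property / the conditional-probability Lemma of Subsection~2.9 to ``restart'' a path of bounded length that avoids $B_{\le r}(y')$ and show it cannot reconnect the two sides of a wide bigon — here the edge-counting of the broken line of Proposition~3 reappears, now at the level of paths rather than rectifiable curves, to show the avoiding event in the bounded-length regime has mass at most $b+a$ minus a bit, forcing $1-\varepsilon \le a+b + (\text{the }\s B\text{-term})$ rearranged into $\varepsilon\ge\varepsilon_0$. \textbf{The main obstacle} I anticipate is precisely this last coupling: translating the purely geometric ``a bounded-length path cannot bypass a wide bigon'' into a probabilistic statement with the \emph{explicit} constant $2(AB+B+3)$, because one must carefully track how many independent ``restart'' opportunities a length-$\le Bc$ path has to cross the $m/2$-wide channel, and pay a factor $a$ (via $\s A$) at each, while only paying $b/(2(AB+B+2))$ total for the overlength paths — getting the arithmetic to close exactly at $\varepsilon_0=(1-a-b)/(2(AB+B+3))$ is where the real work lies. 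Everything else (Proposition~5 to get a $k$-suitable interval, the normalization lemmas, the geometric broken-line bound) is essentially in place from the earlier sections.
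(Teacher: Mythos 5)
Your scaffolding matches the paper's: assume non-hyperbolicity, invoke Theorem P to get bigons of unbounded width, and use Proposition 5 to extract a $k$-suitable interval $J$ with $M\leftrightharpoons\s{min}(\s w_\beta|_J)$ arbitrarily large. But the central construction is missing, and the setup you substitute for it would not work. You center a single ball of radius $r_I\asymp M$ at one point $y$ deep in the bigon and hope to apply $\s{TA}_{\varepsilon,r}$ to it; but $\s{TA}_{\varepsilon,r}$ only concerns balls of the \emph{fixed} radius $r$ from the hypothesis (a growing radius is the geometry of Theorem 2, not Theorem 3), and a single application of it bounds the missing mass only by $\varepsilon$ --- it cannot by itself produce the factor $2(AB+B+3)$ in $\varepsilon_0$. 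In the paper that factor is the \emph{number of balls}: one places $n$ equally spaced points (with $AB+B+2<n\leqslant AB+B+3$) on the suitable interval and puts a radius-$r$ ball at the corresponding point of \emph{each} side of the bigon, getting $2n$ pairwise disjoint ``paradoxical'' balls; the probability space is $\Bbb P_{x,z}$ with $(x,z)=\partial\beta$, the endpoints of the whole bigon, so that every ball center lies on a word geodesic from $x$ to $z$ and $\s{TA}_{\varepsilon,r}$ applies to each; the union bound then gives $\Bbb P(\hbox{missing})\leqslant2n\varepsilon<1-a-b$.

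The second missing piece is the trichotomy and the two length estimates that make $\s A$ and $\s B$ bite. Paths are split into \emph{missing}, \emph{mixing} (the excursion between two adjacent balls on one side meets a ball on the other side), and the remainder $\Cal A$. A combinatorial lemma shows a path in $\Cal A$ visits all the balls of one side before all the balls of the other, so its middle part has length at least $2L-M-4r\geqslant Ac$ with $c=M+2r$, and condition $\s A$ together with the conditional-probability Lemma of 2.9 gives $\Bbb P(\Cal A)\leqslant a$; a mixing path must cross the width-$M$ channel, so its middle part has length at least $M-4r>Bc$ with $c=d+2r$, and condition $\s B$ summed over the $2(n-1)$ adjacent pairs gives $\Bbb P(\hbox{mixing})<b$. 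Adding the three bounds yields $1<1$. Your sketch names all the ingredients but explicitly defers exactly this coupling (``where the real work lies''), and your guess that the constant $2(AB+B+3)$ counts ``restart opportunities'' of a bounded-length path is not how the arithmetic closes --- it counts balls in the union bound, with $n$ tuned so that the mixing sum stays below $b$. As it stands the proposal is a plan rather than a proof, and its single-ball, single-subinterval setup would have to be replaced wholesale by the $2n$-ball construction.
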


Remark. The constant $\varepsilon_0$ in $(1)$ is the critical value mentioned in the Introduction.

We breifly describe the plan of the proof. Until the end of the proof we fix the values $A,a,B,b$ of parameters.
Assuming that $G$ is not hyperbolic, we deduce from Theorem P that the width of geodesic bigons is unbounded.
By Proposition 5, the functions $\s{gb(Ca}(G,\Cal S))\ni\beta\mapsto\s h_k(\s w_\beta)$ are unbounded
for every $k$. We choose a particular value for $k$ that depends on the parameters.
Considering a geodesic bigon $\beta$ with $\s h_k(\s w_\beta)=M$, we will obtain a contradiction
with the definition of the Green probability spaces $\boldsymbol\pi^{-1}(g)$ ($g\in G$).
It will follow once $M$ is sufficiently big, namely if it satisfies the inequalities
$(6)$, $(7)$, $(12)$, and $(15)$ below.
\subsection{The parameters $n$ and $k$}
Let $n$ denote the minimal integer that satisfies the inequality\newline
$n>AB+B+2$. Thus\newline
$(2)$\quad$AB+B+2<n\leqslant AB+B+3$.\newline
By $(1)$ this implies:\newline
$(3)$\quad$2n\varepsilon<1-a-b$.\newline
By $(2)$ one has $A+1<\Frac{n-2}B$, hence there exists $k\in\Bbb R$ such that\newline
$(4)$\quad$\Frac{A+1}2<k<\Frac{n-2}{2B}$.

Until the end of the proof we fix the values $n$ and $k$ satisfying $(2)$ and $(4)$.
\subsection{Paradoxical bigon}
Let $\beta=(\beta_0,\beta_1)$ be a geodesic bigon, $f\leftrightharpoons\s w_\beta$,
$M\leftrightharpoons\s h_k(f)$
and an interval $J=[p,q]$ of length $L=q-p$, $k$-suitable for $f$ (see the previous subsection). Thus\newline
$(5)$\quad$\Frac L{2k}\leqslant M\leftrightharpoons\s{min}(f|_J)=f|_{\partial J}\leqslant\Frac Lk$.\hfil\penalty-10000
We say that an interval $J$ is paradoxical if $M$ satisfies all our requirements.
We say that a bigon is paradoxical if its domain of definition contains a paradoxical interval.

Until the end of the proof we fix a paradoxical bigon $\beta$ and a paradoxcial interval $J$.

We require $M$ to satisfy the following inequalities:\newline
$(6)$ $M\geqslant\Frac{(2r+1)(n-1)}k$ and\newline
$(7)$ $M\geqslant 2r+1$.

It follows from $(2)$ that $n>2$.
Let $T\subset J$ be the set of cardinality $n$ that divides $J$
into $n-1$ pairwise congruent intervals. Thus, $p=\s{min}T$, $q=\s{max}T$.
The distance between two adjacent numbers in $T$ is\newline
$(8)$\quad$d\leftrightharpoons\Frac{q-p}{n-1}\overset{(5),(6)}\geqslant 2r+1$.

On each side $\beta_\iota$ of $\beta$ we consider the
closed balls $B_{\iota,t}\leftrightharpoons B_r(x_{\iota,t})$ centered at
$x_{\iota,t}\leftrightharpoons\beta_\iota(t)$
($t\in T$)
of radius $r$  where $r$ comes from the condition $\s{TA}_{\varepsilon,r}$ (see Figure 3).
We call this $2n$ balls $B_{i,\iota}$ the \it paradoxical balls\rm.

\noindent
\begin{picture}(10,130)(-40,-10)
\put(-14,-70){
\pdfximage{\mpPath 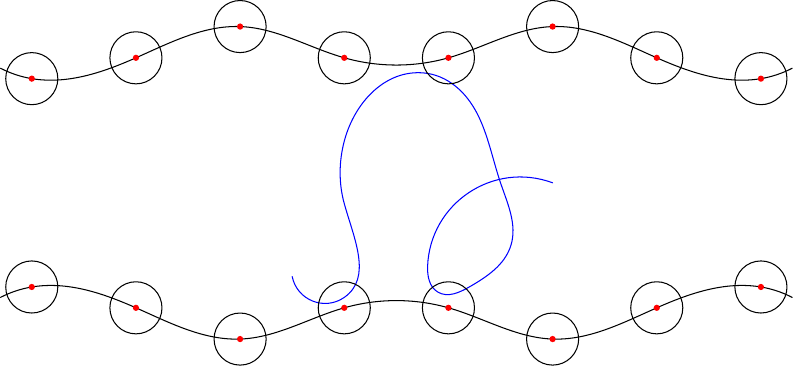}\pdfrefximage\pdflastximage}
\put(5,18){\vector(0,1){45}}
\put(5,18){\vector(0,-1){45}}
\put(4,18){\makebox(0,0)[rc]{$M$}}
\put(356,18){\vector(0,1){45}}
\put(356,18){\vector(0,-1){45}}
\put(358,18){\makebox(0,0)[lc]{$M$}}
	\put(27,18){\vector(1,0){325}}
	\put(270,18){\vector(-1,0){262}}
\put(178,16){\makebox(0,0)[ct]{$L$}}
\put(178,-70){\makebox(0,0)[cc]{Figure 3}}
\put(5,86){\makebox(0,0)[cc]{$B_{1,p}$}}
\put(5,-51){\makebox(0,0)[cc]{$B_{0,p}$}}
\put(55,97){\makebox(0,0)[cc]{$B_{1,p+d}$}}
\put(54,-60){\makebox(0,0)[cc]{$B_{0,p+d}$}}
\end{picture}
\vskip75pt
It follows from $(7)$ and $(8)$ that the balls $B_{\iota,t}$ are pairwise disjoint.
\subsection{Classification of paths} Let $(x,z)\leftrightharpoons\partial\beta$.
We consider the words in $Q\leftrightharpoons\boldsymbol\pi^{-1}(x^{-1}z)$ as paths from $x$ to $z$ as it is explained in Subsection 2.6.
We keep the notation of Subsection 2.9 for the associated Green probability spaces.

A path $\psi\in Q$ is called \it missing \rm if it does not intersect at least one of the paradoxical
balls $B_{\iota,t}$.

We have\hfil\penalty-10000
$(9)$\quad  $\mathbb P_{\partial\beta}Q_{\fam0missing}\leqslant
\displaystyle\sum_{\iota\in\{0,1\},t\in T}\mathbb P_{\partial\beta}\{\psi:(x,\psi)$ misses $B_{\iota,t}\}
\overset{\s{TA}_{\varepsilon,r}}\leqslant2n\varepsilon\overset{(3)}\leqslant 1-a-b$.

A path $\psi\in Q$ is called \it mixing \rm if
there are two adjacent paradoxical balls
 $B_0$, $B_1$ on the same side $\beta_\iota$ of $\beta$, such that
the subpath between the first and the last visit to $B_0\cup B_1$,
intesects at least one paradoxical ball on the other side $\beta_{1-\iota}$ of $\beta$.
We also say in this case that $\psi$ \it mixes \rm the pair $\{B_0,B_1\}$.

On Figure 3 a piece of a mixing path is shown in blue.
\vskip3pt
Denote the set $Q\setminus(Q_{\fam0missing}\cup Q_{\fam0mixing})$ by $\Cal A$.
\subsection{The estimate for $\mathbb P_{\partial\beta}(\Cal A)$}
Let $B_p\leftrightharpoons B_{0,p}\cup B_{1,p}$, $B_q\leftrightharpoons B_{0,q}\cup B_{1,q}$. Denote\vskip3pt
 $\Sigma\leftrightharpoons\{\sigma\in\langle\mathcal S\rangle:$ the path $(x,\sigma)$ ends in
its first visit to $B_p\cup B_q\}$.\vskip3pt
For $(u,\iota)\in\{p,q\}\times\{0,1\}$ let
$\Sigma_{u,\iota}\leftrightharpoons\{\sigma\in\Sigma:x{\cdot}\boldsymbol\pi_\sigma\in B_{u,\iota}\}$,\newline
$\Theta_{u,\iota}\leftrightharpoons\{\theta\in\langle\mathcal S\rangle:$ the path $(z,\theta^{-1})$ ends
in its first visit to $B_{u,\iota}\}$. Denote
\vskip3pt
$\Omega\leftrightharpoons\Sigma_{p,0}\times\Theta_{p,1}\cup\Sigma_{p,1}\times\Theta_{p,0}\cup
\Sigma_{q,0}\times\Theta_{q,1}\cup\Sigma_{q,1}\times\Theta_{q,0}$.\vskip3pt
For $\omega=(\sigma,\theta)$ denote\newline
$(10)$\quad
$g_\omega\leftrightharpoons\boldsymbol\pi(\sigma^{-1}){\cdot}x^{-1}z{\cdot}\boldsymbol\pi(\theta^{-1})$,\hfil\penalty-10000
$\mathcal H_\omega\leftrightharpoons\sigma{\cdot}\boldsymbol\pi^{-1}(g_\omega){\cdot}\theta$.\newline
The sets $\Cal H_\omega$ ($\omega\in\Omega$) are pairwise disjoint by definition of $\Sigma$.

We will find an upper bound for $\mathbb P_{\partial\beta}\Cal A$ by estimation of the
conditional probabilities $\mathbb P_{\partial\beta}(\Cal A|\Cal H_\omega)$ using
Lemma 2.9 and the total probability theorem.

Since each path $\psi\in\Cal A$ is not missing, it belongs to a unique set $\Cal H_\omega$.
So we have a map $\Cal A\overset\lambda\to\Omega$ such that $\psi{\in}\Cal H_\omega$ where $\omega={\lambda(\psi)}=(\sigma,\theta)$
for each $\psi\in\Cal A$.
In this case $\psi$ has the form $\sigma\varphi\theta$
where $\boldsymbol\pi(\varphi){=}g_\omega$.
This $\varphi$ is uniquely defined and we call it the \textit{middle part} of $\psi$.

For $\iota\in\{0,1\}$ denote $B_\iota\leftrightharpoons{\cup}\{B_{\iota,t}:t\in T\}$.

For a path $\psi\in Q$ denote by $N_\iota(\psi)$ the set 
$\psi^{-1}B_\iota,$ $\iota\in\{0,1\}$ of the moments of visits to the paradoxical balls
on the side $\beta_\iota$.
We write `$N_\iota(\psi)<N_{1-\iota}(\psi)$' if
every element of $N_\iota(\psi)$ is less than every element of $N_{1-\iota}(\psi)$.
\vskip3pt
\textsc{Lemma.} \textsl{If $\psi\in\Cal A$ then $N_\iota(\psi)<N_{1-\iota}(\psi)$ for some
$\iota\in\{0,1\}$.}
\vskip3pt
\textit{Proof.}
Suppose that this is not is not true.
Then there exist $\iota\in\{0,1\}$, $s\in N_\iota(\psi)$,
$s_-,s_+\in N_{1-\iota}(\psi)$ such that $s_-<s<s_+$.
Hence, both sets\hfil\penalty-10000
 $T_-\leftrightharpoons\{t\in T:\exists s_-<s:\psi_{s_-}\in B_{1-\iota,t}\}$,
$T_+\leftrightharpoons\{t\in T:\exists s_+>s:\psi_{s_+}\in B_{1-\iota,t}\}$ are nonempty.\hfil\penalty-10000
Since $\psi$ is not missing, $T_-\cup T_+=T$.
It follows that there exist $t_-\in T_-$, $t_+\in T_+$ such that $|t_--t_+|=d$.
So $\psi$ is mixing contradicting the hypothesis.\hfill$\square$
\vskip3pt
On Figure 4 a piece of a typical path $\psi\in\Cal A$ is shown in blue. For this path we have
$N_1(\psi)<N_0(\psi)$.

\noindent
\begin{picture}(0,205)(-40,-46)
\put(-14,-35){
\pdfximage{\mpPath 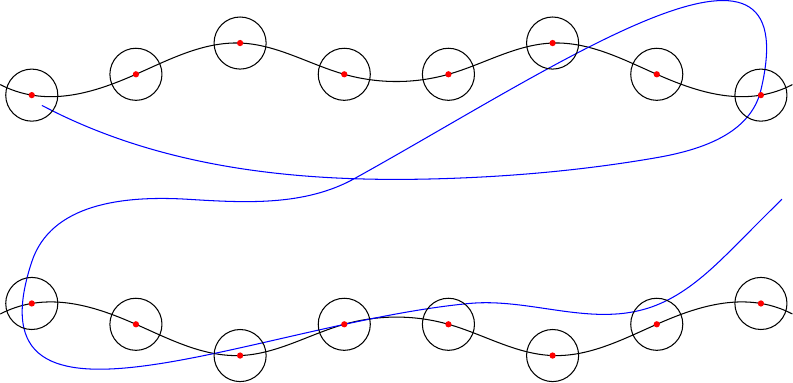}\pdfrefximage\pdflastximage}
\put(5,20){\vector(0,1){68}}
\put(5,84){\vector(0,-1){66}}
\put(356,18){\vector(0,1){71}}
\put(356,88){\vector(0,-1){70}}
\put(27,60){\vector(1,0){325}}
\put(270,60){\vector(-1,0){262}}
\put(175,58){\makebox(0,0)[ct]{$L$}}
\put(175,-35){\makebox(0,0)[cc]{Figure 4}}
\put(0,60){\makebox(0,0)[rc]{$M$}}
\put(12,122){\makebox(0,0)[rc]{$B_{1,p}$}}
\put(8,-15){\makebox(0,0)[rc]{$B_{0,p}$}}
\put(366,122){\makebox(0,0)[rc]{$B_{1,q}$}}
\put(366,-17){\makebox(0,0)[rc]{$B_{0,q}$}}
\put(358,60){\makebox(0,0)[lc]{$M$}}
\end{picture}
\vskip3pt
Let $\psi\in\Cal A$, $\lambda(\psi)=(\sigma,\theta)$, $\psi=\sigma{\cdot}\varphi{\cdot}\theta$,
$x'\leftrightharpoons x{\cdot}\boldsymbol\pi_\sigma\in B_{u,\iota}$
and $z'\leftrightharpoons z{\cdot}\boldsymbol\pi_{\theta^{-1}}\in B_{u,1-\iota}$.\hfil\penalty-10000
Thus $|x'\hbox{-}z'|\leqslant M+2r\leftrightharpoons c\overset{(7)}\geqslant 1$.\hfil\penalty-10000
By Lemma, the subpath of $\psi$
between the first visit to $x'$ and the first visit to $z'$ intersects
the ball $B_{\iota,v}$ where $\{u,v\}=\{p,q\}$. This subpath is also a subpath of $\varphi$,
hence $\ell_\varphi\geqslant2L-M-4r$, see Figure 4.

For $\omega=(\sigma,\theta)\in\Omega$, we have\hfil\penalty-10000
$(11)$\quad $\mathbb P_{x,z}(\Cal A|\Cal H_\omega)=
\Frac{\mathbb P_{x,z}\{\psi\in\Cal A:\lambda(\psi)=\omega\}}{\mathbb P_{x,z}\Cal H_\omega}\overset{2.9}\leqslant
\mathbb P_{\boldsymbol\iota,g_\omega}\{\varphi\in\boldsymbol\pi^{-1}g_\omega:
\sigma\varphi\theta\in\Cal A,\lambda(\sigma\varphi\theta)=\omega\}\leqslant$\hfil\penalty-10000
$\leqslant\mathbb P_{\boldsymbol\iota,g_\omega}\{\varphi\in\boldsymbol\pi^{-1}g_\omega,\ell_\varphi\geqslant2L-M-4r\}$.

By postulating one more inequality:\newline
$(12)$\quad$M\geqslant\s{max}\left\{1,\Frac{2r(A+2)}{2k-A-1}\right\}$ (the denominator is positive by $(4)$), we obtain\newline
$(13)$\quad$M(2k-A-1)\geqslant2r(A+2)$ and\hfil\penalty-10000
$2L-M-4r\overset{(5)}\geqslant2kM-M-4r=$\hfil\penalty-10000
$=(2k-A-1)M+(A+1)M-M-4r=$\newline
$=AM-4r+(2k-A-1)M\overset{(13)}\geqslant AM-4r+2r(A+2)=
A(M+2r)=Ac$.\hfil\penalty-10000
Since $|g_\omega|\leqslant c$, by condition $\s A$, we have\hfil\penalty-10000
$\mathbb P_{x,z}(\Cal A|\Cal H_\omega)\overset{(11)}\leqslant
\Bbb P_{\boldsymbol\iota,g_\omega}\left\{\varphi\in\boldsymbol\pi^{-1}g_\omega:
\ell_\varphi\geqslant2L-M-4r\right\}\leqslant
\Bbb P_{\boldsymbol\iota,g_\omega}\left\{\varphi\in\boldsymbol\pi^{-1}g_\omega:
\ell_\varphi\geqslant Ac\right\}\leqslant a$.

Since the sets $\Cal H_\omega$ are disjoint by the total probability theorem we have:\newline
$(14)$\quad$\Bbb P_{x,z}\mathcal A=
\displaystyle\sum_\omega\mathbb P_{x,z}(\mathcal A|\mathcal H_\omega){\cdot}\mathbb P_{x,z}\mathcal H_\omega
\leqslant a\displaystyle\sum_\omega\mathbb P_{x,z}\mathcal H_\omega\leqslant a$.
\subsection{The estimate for $\Bbb P_{\partial\beta}Q_{\fam0mixing}$}
Let $\mathcal E\leftrightharpoons\{0,1\}{\times}(T\setminus\{q\})$.\hfil\penalty-10000
For $e=(\iota,t)\in\mathcal E$, let $B_e\leftrightharpoons B_{\iota,t}\cup B_{\iota,t+d}$ the union of
two adjacent paradoxical balls, and\hfil\penalty-10000
$Q_e\leftrightharpoons\{\psi\in\boldsymbol\pi^{-1}(x^{-1}z):(x,\psi)$ separates the pair $\{B_{\iota,t},B_{\iota,t+d}\}$.
We will prove that\hfil\penalty-10000
 $\Bbb P_{x,z}Q_e\leqslant\Frac b{2(AB+B+2)}$ for $M\gg0$.

Denote\newline
$\Sigma\leftrightharpoons\{\sigma\in\langle\mathcal S\rangle:$ the path $(x,\sigma)$ ends at its first
visit to $B_e\}$;\hfil\penalty-10000
$\Theta\leftrightharpoons\{\theta\in\langle\mathcal S\rangle:$ the path $(z,\theta^{-1})$ ends at its
first visit to $B_e\}$.\hfil\penalty-10000
$\Omega\leftrightharpoons\Sigma\times\Theta$.

The words in $\Sigma$ are pairwise incomparable with respect to `$\leqslant_{\fam0l}$'
the words in $\Theta$
are pairwise incomparable with respect to `$\leqslant_{\fam0r}$' (see Subsection 2.6).
Thus, by the Lemma from 2.6, the sets $\sigma\langle\mathcal S\rangle\theta$, ($(\sigma,\theta)\in\Omega$) are pairwise disjoint.

So $Q_e\subset{\cup}\{\sigma\langle\Cal S\rangle\theta:(\sigma,\theta)\in\Omega\}$ and we can
apply the total probability theorem.

By definition of paradoxical balls, for each $\omega\in\Omega$
one has
$|g_\omega|\leqslant d+2r\leftrightharpoons c\geqslant1$, where $g_\omega$ is from $(10)$,\newline
$\Bbb P_{x,z}(Q_e|\Cal H_\omega)=
\Bbb P_{\boldsymbol\iota,g_\omega}\{\varphi\in\boldsymbol\pi^{-1}g_\omega:
x{\cdot}\boldsymbol\pi_\sigma{\cdot}\s{Im}\varphi\cap B_{1-\iota}\ne\varnothing\}\leqslant$\newline
$\leqslant
\Bbb P_{\boldsymbol\iota,g_\omega}\{\varphi\in\boldsymbol\pi^{-1}g_\omega:
\ell_\varphi\overset{\fam0Lemma\ 2.3}\geqslant M-4r\}$.

Postulating\newline
$(14)$\quad$M\geqslant2r(B+2)(2kB+1)$, we obtain\newline
$(16)$\quad$\Frac{2r(B+2)}M\overset{(15)}\leqslant\Frac1{2kB+1}$;\hfil\penalty-10000
$(17)$\quad$1-\Frac{2r(B+2)}M\overset{(16)}\geqslant1-\Frac1{2kB+1}=\Frac{2kB}{2kB+1}$;\hfil\penalty-10000
$(18)$\quad$\Frac1{1-\Frac{2r(B+2)}M}\overset{(17)}\leqslant\Frac{2kB+1}{2kB}$;\hfil\penalty-10000
$(19)$\quad$\Frac1{1-\Frac{2r(B+2)}M}-1\overset{(18)}\leqslant\Frac1{2kB}$;\hfil\penalty-10000
$(20)$\quad$B'-B\overset{(19)}\leqslant\Frac1{2k}$, where
$B'\leftrightharpoons\Frac B{1-\Frac{2r(B+2)}M}$;\hfil\penalty-10000
$(21)$\quad$\Frac LM{\cdot}B'\overset{(5)}\leqslant2kB'\leqslant2kB+1\overset{(4)}<n-1$;\hfil\penalty-10000
$(22)$\quad$M\overset{(8),(21)}>dB'\overset{(20)}=\Frac{dB}{1-\Frac{2r(B+2)}M}=\Frac{dBM}{M-2r(B+2)}$;\hfil\penalty-10000
$(23)$\quad$dB\overset{(22)}<M-2r(B+2)$;\hfil\penalty-10000
$B(d+2r)\overset{(23)}<M-4r$;\hfil\penalty-10000
$\ell_\varphi\geqslant M-4r>Bc$ where $c=d+2r\overset{(8)}\geqslant1$.

Thus, by Condition $\s B$, $\Bbb P_{x,z}(Q_e|\Cal H_{g_\omega,\sigma,\theta})<\Frac b{2(AB+B+2)}$.
By the total probability theorem, we have\newline
$\Bbb P_{x,z}Q_e=\displaystyle\sum_\omega\mathbb P_{x,z}(Q_e|\mathcal H_{g_\omega,\sigma,\theta}){\cdot}\mathbb P_{x,z}\mathcal H_{g_\omega,\sigma,\theta}
<\Frac b{2(AB+B+2)}$.

 The parameter $e$ has its values in the
set $\mathcal E$ of cardinality $2(n-1)$. So\hfil\penalty-10000
$(24)$\quad$\Bbb P_{x,z}Q_{\fam0mixing}\leqslant\Frac{2(n-1)b}{2(AB+B+2)}\overset{(2)}<b$.

By $(9)$, $(14)$, $(24)$ we have
$1=\Bbb P_{x,z}Q\leqslant\Bbb P_{x,z}Q_{\fam0missing}+
\Bbb P_{x,z}Q_{\fam0mixing}+\Bbb P_{x,z}(Q\setminus(Q_{\fam0missing}{\cup}Q_{\fam0mixing}))<1$
which is a contradiction.

Proposition 6 and Theorem 3 are proved.

\end{document}